\let\frak\mathfrak
\def\>{\relax\ifmmode\mskip.666667\thinmuskip\relax\else\kern.111111em\fi}
\def\<{\relax\ifmmode\mskip-.333333\thinmuskip\relax\else\kern-.0555556em\fi}
\def\vsk#1>{\vskip#1\baselineskip}
\def\vv#1>{\vadjust{\vsk#1>}\ignorespaces}
\def\vvn#1>{\vadjust{\nobreak\vsk#1>\nobreak}\ignorespaces}
  \let\ssize\scriptstyle
\let\sssize\scriptscriptstyle
\let\Medskip\medskip
\def\medskip{\par\Medskip}
\let\Bigskip\bigskip
\def\bigskip{\par\Bigskip}
\let\Maketitle\maketitle
\def\maketitle{\Maketitle\thispagestyle{empty}\let\maketitle\empty}
\newtheorem{thm}{Theorem}[section]
\newtheorem{cor}[thm]{Corollary}
\theoremstyle{definition}                                  
\newtheorem{exmp}{Example}[section]
\numberwithin{equation}{section}
\theoremstyle{definition}
\newtheorem*{rem}{Remark}
\let\mc\mathcal
\let\nc\newcommand
\let\ka\kappa
\let\la\lambda
\let\phi\varphi
\let\Om\Omega
\let\der\partial
\let\ox\otimes
\let\geq\geqslant
\let\le\leqslant
\let\leq\leqslant
\let\on\operatorname
\let\bi\bibitem
\let\bs\boldsymbol
\def\C{{\mathbb C}}
\def\Z{{\mathbb Z}}
\def\F{{\mc F}}
\def\+#1{^{\{#1\}}}
\def\beq{\begin{equation}}
\def\eeq{\end{equation}}
\def\be{\begin{equation*}}
\def\ee{\end{equation*}}
\nc{\bea}{\begin{eqnarray*}}
\nc{\eea}{\end{eqnarray*}}
\nc{\bean}{\begin{eqnarray}}
\nc{\eean}{\end{eqnarray}}
\nc{\Ref}[1]{{\rm(\ref{#1})}}
\def\g{{\mathfrak g}}
\let\ga\gamma
\let\Ga\Gamma
\nc{\Il}{{\mc I_{\bs\la}}}
\nc{\bla}{{\bs\la}}
\nc{\Fla}{\F_\bla}
\nc{\tfl}{{T^*\Fla}}
\nc{\GL}{{GL_n(\C)}}
\nc{\GLC}{{GL_n(\C)\times\C^*}}
\let\sd s 
\def\ddk_#1{\kk_{#1}\<\>\frac\der{\der\<\>\kk_{#1}}}
\def\bul{\mathbin{\raise.2ex\hbox{$\sssize\bullet$}}}
\def\intt{\mathchoice
{\mathop{\raise.2ex\rlap{$\,\,\ssize\backslash$}{\intop}}\nolimits}
{\mathop{\raise.3ex\rlap{$\,\sssize\backslash$}{\intop}}\nolimits}
{\mathop{\raise.1ex\rlap{$\sssize\>\backslash$}{\intop}}\nolimits}
{\mathop{\rlap{$\sssize\<\>\backslash$}{\intop}}\nolimits}}
\let\kk q 
\let\cc c
\let\Ko K
\def\GZ/{Gelfand-Zetlin}
\def\KZ/{{\slshape KZ\/}}
\def\qKZ/{{\slshape qKZ\/}}
\def\XXX/{{\slshape XXX\/}}
\def\Sym{\on{Sym}}
\nc{\A}{{\mc C}}
\def\FF{{\mathbb F}}
\def\Sing{{\on{Sing}}}
\def\sll{{\frak{sl}}}
\def\slt{{\frak{sl}_2}}
\def\Ant{{\on{Ant}}}
\def\Ik{{\mc I_k}}
\def\Q{{\mathbb Q}}
\begin{document}

\hrule width0pt
\vsk->

\title[Solutions of KZ differential equations modulo  $p$]
{Solutions of KZ differential equations \\
modulo  $p$}

\author
[Vadim Schechtman and Alexander Varchenko]
{ Vadim Schechtman$\>^\circ$ and Alexander Varchenko$\>^\star$}

\maketitle

\begin{center}
{\it ${}^\circ$ Institut de Math\'ematiques de Toulouse\,--\,  Universit\'e Paul Sabatier\\ 118 Route de Narbonne,
31062 Toulouse, France \/}

\vsk.5>
{\it $^{\star}\<$Department of Mathematics, University
of North Carolina at Chapel Hill\\ Chapel Hill, NC 27599-3250, USA\/}

\end{center}

{\let\thefootnote\relax
\footnotetext{\vsk-.8>\noindent
$^\circ\<${\sl E\>-mail}:\enspace vadim.schechtman@math.univ-toulouse.fr
\\
$^\star\<${\sl E\>-mail}:\enspace anv@email.unc.edu\>,
supported in part by NSF grants DMS-1362924, DMS-1665239}}

\begin{abstract}
We construct polynomial solutions of the KZ differential equations over 
a finite field $\FF_p$ as analogs of hypergeometric solutions.

\end{abstract}

\bigskip

{\it Keywords}:\ KZ differential equations; multidimensional hypergeometric integrals; polynomial
solutions over finite fields.

{\it 2010 Mathematics Subject Classification}: 81R12 (11C08, 14H52)

\bigskip

\begin{center}
To Yu.I. Manin with admiration on the occasion of his 80th birthday
\end{center}

{\small \tableofcontents  }

\setcounter{footnote}{0}
\renewcommand{\thefootnote}{\arabic{footnote}}

\section{Introduction}
The KZ equations were discovered by physicists Vadim Knizhnik and Alexander Zamolodchikov  \cite{KZ}
to describe the differential equations for conformal blocks on sphere in the Wess-Zumino-Witten model of conformal
field theory. As I.M.\,Gelfand said, the KZ equations are remarkable 
differential equations discovered by physicists, defined in terms
of a Lie algebra and whose monodromy is described by the corresponding quantum group. It turned out that the KZ equations
are realized as suitable Gauss-Manin connections and its solutions are represented by multidimensional hypergeometric integrals,
see \cite {CF,DJMM,Mat,SV1,SV2,SV3}. The fact that certain integrals of closed differential forms over cycles
satisfy a linear  differential equation follows by Stokes' theorem from a suitable cohomological relation,
in which the  result of the
application of the corresponding differential operator to the integrand of an integral equals the differential of a form of one degree less.
Such cohomological relations for the KZ equations associated with Kac-Moody algebras were developed in \cite{SV3}. 

The goal of this paper is to construct polynomial solutions of the KZ differential equations over 
a finite field $\FF_p$ with $p$ elements, where $p$ is a prime number, as analogs of the hypergeometric solutions constructed in \cite{SV3}.
Our construction is based on the fact
that all cohomological relations described in \cite{SV3} are defined over $\Z$ and can be reduced 
modulo $p$. We learned how to construct  polynomial solutions in this situation out of hypergeometric solutions from the remarkable paper by
Yu.I.\,Manin \cite{Ma}, see a detailed exposition of Manin's idea in   Section ``Manin's Result: The Unity of Mathematics'' in the book \cite{Cl} by H.C.\,Clemens. 

In the remainder of the introduction we consider the  example of one-dimensional hypergeometric and $p$-hypergeometric integrals as an illustration of our constructions and results. The multidimensional case is considered in Sections \ref{2}-\ref{4}.

\subsection{Case of field $\C$}
\label{CaseC}

Let $\ka, m_1,\dots,m_n$ be nonzero complex numbers, $z=(z_1,\dots, z_n)\in\C^n, t\in\C$. Denote $|m|=m_1+\dots+m_n$.
Consider the {\it master function}
\bea
\Phi(t,z_1,\dots,z_n) = \prod_{1\leq a<b\leq n}(z_a-z_b)^{m_am_b/2\ka}\prod_{a=1}^n(t-z_a)^{-m_a/\ka}
\eea
and  the $n$-vector  
\bean
\label{Iga}
I^{(\ga)} (z)=(I_1(z),\dots,I_n(z)),
\eean
 where
\bean
\label{s}
I_j=\int  \Phi(t,z_1,\dots,z_n) \frac {dt}{t-z_j},\qquad j=1,\dots,n.
\eean
The integrals are over a closed (Pochhammer) curve $\ga$ in $\C-\{z_1,\dots,z_n\}$ on which one fixes a uni-valued branch 
of the master function to make the integral well-defined.
Starting from such a curve chosen for given $\{z_1,\dots,z_n\}$, the vector $I^{(\ga)}(z)$ can be analytically continued as a multivalued holomorphic function of $z$ to the complement in $\C^n$ to the union of the
diagonal  hyperplanes $z_i=z_j$.

\begin{thm}
\label{thm1.1}

 The vector $I^{(\ga)}(z)$ satisfies the algebraic equation
\bean
\label{le}
m_1I_1(z)+\dots+m_nI_n(z)=0
\eean
and  the differential KZ equations:
\bean\label{KZ}
 \frac{\partial I}{\partial z_i} \ = \
   {\frac 1\kappa} \sum_{j \ne i}
   \frac{\Omega_{i,j}}{z_i - z_j}  I ,
\qquad i = 1, \dots , n,
\eean
where
\[ \Omega_{i,j} \ = \ \begin{pmatrix}

                   & \vdots^i &  & \vdots^j &  \\

        {\scriptstyle i} \cdots & { \frac{(m_i -2)m_j}2 } & \cdots &
                 m_j & \cdots \\

                   & \vdots &  & \vdots &   \\

        {\scriptstyle j} \cdots & m_i & \cdots & \frac{m_i(m_j-2)}2 &
                 \cdots \\

                   & \vdots &  & \vdots &

                   \end{pmatrix} ,
                    \]
all other diagonal entries are $\frac{m_im_j}2$ and the
remaining off-diagonal entries are all zero.
\end{thm}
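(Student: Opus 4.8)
The plan is to prove \Ref{le} and \Ref{KZ} by direct computation with the integrals \Ref{s}. The one structural fact used throughout is that $\ga$ is a \emph{closed} curve carrying a uni-valued branch of $\Phi$, so that $\int_\ga d\om=0$ for every $1$-form $\om$ built algebraically from $\Phi$ and $t$; this is exactly what makes the "boundary terms'' below vanish. For the linear relation, note that the factor $\prod_{a<b}(z_a-z_b)^{m_am_b/2\ka}$ does not involve $t$, so $\der_t\log\Phi=-\tfrac1\ka\sum_{a=1}^n m_a/(t-z_a)$, hence $\sum_{j=1}^n m_j\,\Phi/(t-z_j)=-\ka\,\der_t\Phi$; integrating over $\ga$ gives $m_1I_1+\dots+m_nI_n=-\ka\int_\ga d\Phi=0$.

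\emph{The KZ equations, off-diagonal rows.} Fix $i$ and $j\neq i$; only $\Phi$ depends on $z_i$ in the integrand of $I_j$, and
\[\der_{z_i}\log\Phi \,=\, \sum_{b\neq i}\frac{m_im_b}{2\ka(z_i-z_b)} \,+\, \frac{m_i}{\ka(t-z_i)},\]
so differentiation under the integral sign yields $\der_{z_i}I_j=\bigl(\sum_{b\neq i}\tfrac{m_im_b}{2\ka(z_i-z_b)}\bigr)I_j+\tfrac{m_i}\ka\int_\ga\Phi\,dt/((t-z_i)(t-z_j))$. The partial-fraction identity $\frac1{(t-z_i)(t-z_j)}=\frac1{z_i-z_j}\bigl(\frac1{t-z_i}-\frac1{t-z_j}\bigr)$ turns the last integral into $\frac1{z_i-z_j}(I_i-I_j)$, giving
\[\der_{z_i}I_j \,=\, \frac{m_i}{\ka(z_i-z_j)}(I_i-I_j) \,+\, \Bigl(\sum_{b\neq i}\frac{m_im_b}{2\ka(z_i-z_b)}\Bigr)I_j.\]
Using $\tfrac{m_i(m_j-2)}2=\tfrac{m_im_j}2-m_i$ one rewrites the $I_j$-coefficient as $\tfrac1\ka\bigl(\sum_{k\neq i,j}\tfrac{m_im_k}{2(z_i-z_k)}+\tfrac{m_i(m_j-2)}{2(z_i-z_j)}\bigr)$, and then the right-hand side is exactly the $j$-th component of $\tfrac1\ka\sum_{k\neq i}\tfrac{\Omega_{i,k}}{z_i-z_k}I$: the $k=j$ term supplies the entry $m_i$ in column $i$ and $\tfrac{m_i(m_j-2)}2$ in column $j$, the terms with $k\neq i,j$ supply the diagonal entries $\tfrac{m_im_k}2$ in column $j$, and all the other coefficients vanish together with the other matrix entries.

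\emph{The KZ equations, the diagonal row.} For $j=i$ the variable $z_i$ occurs also in the factor $1/(t-z_i)$, so $\der_{z_i}I_i=\bigl(\sum_{b\neq i}\tfrac{m_im_b}{2\ka(z_i-z_b)}\bigr)I_i+(1+\tfrac{m_i}\ka)\,J_i$ with $J_i:=\int_\ga\Phi\,dt/(t-z_i)^2$. This double pole is the one point requiring care, and it is removed by integration by parts: from $0=\int_\ga d(\Phi/(t-z_i))$ and $\der_t\Phi=-\tfrac\Phi\ka\sum_a m_a/(t-z_a)$ one gets $J_i=\int_\ga\tfrac{\der_t\Phi}{t-z_i}\,dt=-\tfrac1\ka\bigl(m_iJ_i+\sum_{a\neq i}\tfrac{m_a}{z_i-z_a}(I_i-I_a)\bigr)$, hence $(1+\tfrac{m_i}\ka)J_i=-\tfrac1\ka\sum_{a\neq i}\tfrac{m_a}{z_i-z_a}(I_i-I_a)$; note the factor $\ka+m_i$ cancels, so no spurious poles in the $m$'s survive. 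Substituting back,
\[\der_{z_i}I_i \,=\, \Bigl(\sum_{b\neq i}\frac{m_im_b}{2\ka(z_i-z_b)}\Bigr)I_i - \frac1\ka\sum_{b\neq i}\frac{m_b}{z_i-z_b}(I_i-I_b) \,=\, \frac1\ka\sum_{k\neq i}\frac{m_k}{z_i-z_k}\Bigl(\frac{m_i-2}2\,I_i+I_k\Bigr),\]
which is precisely the $i$-th component of $\tfrac1\ka\sum_{k\neq i}\tfrac{\Omega_{i,k}}{z_i-z_k}I$, the relevant entries being $\tfrac{(m_i-2)m_k}2$ in column $i$ and $m_k$ in column $k$.

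The only genuine obstacle is the diagonal row: one must recognize that naive differentiation of $I_i$ produces the double pole $1/(t-z_i)^2$, which is not of integrand type, handle it through the exact form $d(\Phi/(t-z_i))$, and check that the denominator $\ka+m_i$ cancels so the answer has the stated rational shape. Everything else is bookkeeping with partial fractions, and the vanishing of every total differential is guaranteed by working on the closed Pochhammer curve $\ga$.
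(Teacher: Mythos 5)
Your proof is correct and is in substance the same as the paper's: the paper rests on the cohomological identities \Ref{i1} and \Ref{ci} (whose verification it declares ``straightforward'') together with Stokes' theorem on the closed curve $\ga$, and your computation is precisely that verification carried out at the level of the integrals — the partial-fraction step for the off-diagonal rows and the exact form $d(\Phi/(t-z_i))$ for the diagonal row are exactly the content of \Ref{ci} with $W^i=(0,\dots,0,\tfrac{-1}{t-z_i},0,\dots,0)$. No gap.
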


\begin{rem}
The vector $I^{(\ga)}(z)$ depends on the choice of the curve $\ga$. Different  curves give different solutions of the same KZ equations and all solutions of  equations \Ref{le} and \Ref{KZ} are obtained in this way, if $\kappa,m_1,\dots,m_n$ are generic.
\end{rem}

\begin{rem}
The differential equations \Ref{KZ} are the KZ differential equations with parameter $\ka$ associated with the Lie algebra $\sll_2$ and the singular weight subspace of weight $|m|-2$ of the tensor product of $\sll_2$-modules with highest weights
$m_1,\dots,m_n$, see Section \ref {2}.
\end{rem}

\begin{rem}
The KZ equations define a flat connection over the complement in $\C^n$ to the union of all diagonal hyperplanes,
\bean
\label{flat}
 \left[\frac{\partial }{\partial z_i} -
   {\frac 1\kappa} \sum_{j \ne i}
   \frac{\Omega_{i,j}}{z_i - z_j},\
  \frac{\partial }{\partial z_k} -
   {\frac 1\kappa} \sum_{j \ne k}
   \frac{\Omega_{k,j}}{z_k - z_j}\right]=0
\eean
for all $j,k$.
\end{rem}

Theorem \ref{thm1.1} is a classical statement probably known in 19th century. 
Much more general algebraic and differential equations satisfied by analogous multidimensional hypergeometric integrals were considered in \cite{SV3}.  Theorem \ref{thm1.1} is discussed as an example in  \cite[Section 1.1]{V2}.

Below we give a proof of Theorem \ref{thm1.1}. A modification of this proof  in Section \ref{Case F}
will produce for us polynomial solutions of the equations
\Ref{le} and \Ref{KZ} modulo a prime $p$.

\medskip
\noindent
{\it Proof of Theorem \ref{thm1.1}.}
Equations \Ref{le} and \Ref{KZ}  are implied by the following cohomological identities.
We have
\bean
\label{i1}
\frac{-m_1}\ka\Phi(t,z)\frac {dt}{t-z_1} + \dots +\frac{-m_1}\ka\Phi(t,z) \frac {dt}{t-z_n} = d_t \Phi(t,z),
\eean
where $d_t$ denotes the differential with respect to the variable $t$.
This identity and Stokes' theorem imply equation \Ref{le}.

Denote
\bean
\label{V}
V(t,x) = \Big(\frac {dt}{t-z_1}, \dots, \frac {dt}{t-z_n}\Big).
\eean
For any $i=1,\dots,n$,  let $W^i(t,z)$ be the vector of  $(0,\dots,0,\frac{-1}{t-z_i},0,\dots,0)$ with nonzero element at the $i$-th place. Then
\bean
\label{ci}
\Big(\frac{\der I}{\der z_i}-\frac1\ka
\sum_{j\ne i} \frac {\Omega_{i,j}}{z_i-z_j} \Big)\Phi (t,z) V(t,x) = d_t(\Phi(t,z) W^i(t,z)).
\eean
The proof of this identity is straightforward. Much more general identities of this type see in
 \cite[Lemmas 7.5.5 and 7.5.7]{SV3}, cf. identities in Section \ref{sec2.4}.
 
 Identity \Ref{ci} and Stokes' theorem imply the KZ equation \Ref{KZ}.
\qed

\begin{exmp}
\label{ex1}
Let $\ka=2$,  $n=3$, $m_1=m_2=m_3=1$. Then $I^{(\ga)}(z)=(I_1(z),I_2(z),I_3(z))$,  where
\bean
\label{I3}
I_j(z) = \prod_{1\leq a< b\leq 3} \!\!\! \sqrt[4]{z_a-z_b}\
\int_{\ga(z)} \frac 1{\sqrt{(t-z_1)(t-z_2)(t-z_3)}}\frac {dt}{t-z_j}.
\eean
In this case, the curve $\ga(z)$ may be thought of as a closed path on the elliptic curve
\bea
y^2=(t-z_1)(t-z_2)(t-z_3).
\eea
Each of these integrals is an elliptic integral. Such an integral is
 a branch of analytic continuation of a suitable Euler hypergeometric function up to change of variables.


\end{exmp}

\subsection{Case of field $\FF_p$}
\label{Case F}

Let $\ka, m_1,\dots,m_n$ be positive integers. 
Let $p>2$ be a prime number, $p\nmid\ka$.
The algebraic equation \Ref{le} and the differential KZ equations \Ref{KZ}
are well-defined when reduced modulo $p$. The reduction of the KZ equations
satisfies the flatness condition \Ref{flat}.
We  construct solutions of  equations \Ref{le} and \Ref{KZ}
with values in $(\FF_p[z])^n$. Notice that the space of such solutions is a module over the ring 
$\FF_p[z_1^p,\dots,z_n^p]$ since $\frac{\der z_i^p}{\der z_j} =0$.

Choose positive integers $M_a$ for $a=1,\dots,n$ and
$M_{a,b}$ for $1\le a<b\leq n$ such that 
\bea
M_a\equiv -\frac{m_a}\ka,
\qquad 
M_{a,b}\equiv \frac{m_am_b}{2\ka} 
\qquad 
(\on{mod} \,p).
\eea
  That means that we project $m_a, \ka, 2$ to $\FF_p$, calculate $-\frac{m_a}\ka, \frac{m_am_b}{2\ka}$ in $\FF_p$
  and then choose positive integers $M_a, M_{a,b}$ satisfying these equations.
  
      Fix an integer $q$. Consider the {\it master polynomial}
\bea
\Phi^{(p)}(t,z) = \prod_{1\leq a<b\leq n}(z_a-z_b)^{M_{a,b}}\prod_{a=1}^n(t-z_a)^{M_a},
\eea
and the Taylor expansion with respect to the variable $t$ of the vector of polynomials
\bea
\Phi^{(p)}(t,z)\Big(\frac 1{t-z_1}, \dots,\frac 1{t-z_n}\Big)\,=\, \sum_i \bar I^{(i)}(z,q) \,(t-q)^i,
\eea
where the $\bar I^{(i)}(z,q)$ are $n$-vectors of polynomials in $z$ with integer coefficients.
Let $I^{(i)}(z,q) \in (\FF_p[z])^n $ be the canonical projection of
$\bar I^{(i)}(z,q)$.

\begin{thm} 
\label{thm 1.1}
For any integer $q$ and positive integer $l$, the vector of polynomials
\\
 $ I^{(lp-1)}(z,q)$ 
satisfies equations \Ref{le} and \Ref{KZ}.

\end{thm}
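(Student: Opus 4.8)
The plan is to mimic the complex‐analytic proof, replacing "integrate a closed form over a Pochhammer cycle" by "extract the coefficient of $(t-q)^{lp-1}$ in a Taylor expansion." The key algebraic fact is that, over $\FF_p[z]$, the operation $P(t)\mapsto[\text{coefficient of }(t-q)^{lp-1}\text{ in }P]$ annihilates every exact polynomial differential $d_t G$: indeed if $G=\sum_k g_k(z)(t-q)^k$ then $d_tG=\sum_k k\,g_k(z)(t-q)^{k-1}$, and the coefficient of $(t-q)^{lp-1}$ is $lp\cdot g_{lp}(z)$, which vanishes mod $p$. This is the finite‑field analogue of "the integral of an exact form over a closed cycle is zero," and it is the only place where the shape of the exponent $lp-1$ is used. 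So the strategy is: take the two cohomological identities \Ref{i1} and \Ref{ci} from the proof of Theorem \ref{thm1.1}, observe that they are identities of polynomials in $t,z$ with integer coefficients once $\Phi$ is replaced by $\Phi^{(p)}$ (the exponents $M_a,M_{a,b}$ are now positive integers, so there are no branch issues and everything is a genuine polynomial), reduce them mod $p$, expand in powers of $(t-q)$, and read off the coefficient of $(t-q)^{lp-1}$ on both sides.

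First I would record the two polynomial identities with $\Phi^{(p)}$ in place of $\Phi$. For \Ref{le}: since $d_t\Phi^{(p)}=\Phi^{(p)}\sum_a \frac{M_a}{t-z_a}\,dt$ and $M_a\equiv -m_a/\ka$, we get $\sum_a m_a\,\Phi^{(p)}\frac{dt}{t-z_a}\equiv -\ka\, d_t\Phi^{(p)}\pmod p$ as polynomial $1$‑forms in $t$ (clearing the $t-z_a$ denominators is legitimate because $\Phi^{(p)}$ contains the factors $(t-z_a)^{M_a}$ with $M_a\ge 1$). For \Ref{ci}: one checks, exactly as in the proof over $\C$ but now as an identity of rational functions in $t$ with the $(t-z_a)$‑factors of $\Phi^{(p)}$ clearing all poles, that
\[
\Big(\frac{\der }{\der z_i}-\frac1\ka\sum_{j\ne i}\frac{\Omega_{i,j}}{z_i-z_j}\Big)\big(\Phi^{(p)}(t,z)V(t,z)\big)\;\equiv\;d_t\big(\Phi^{(p)}(t,z)W^i(t,z)\big)\pmod p,
\]
where $V,W^i$ are as in \Ref{V} and the line after it. Here $\der/\der z_i$ acts on the coefficients of $\Phi^{(p)}$; the congruence mod $p$ is forced by the congruences $M_a\equiv -m_a/\ka$, $M_{a,b}\equiv m_am_b/2\ka$, which are precisely what make the $\Omega_{i,j}$‑terms match the logarithmic derivative of $\Phi^{(p)}$ in $z_i$. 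Both sides are polynomials in $t$ (after multiplying through by $\prod_a(t-z_a)$ if one prefers, or just noting $\Phi^{(p)}$ already supplies the needed factors) with coefficients in $\FF_p[z]$.

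Next I would apply the coefficient‑extraction functional. Write $\Phi^{(p)}(t,z)V(t,z)=\sum_i \bar I^{(i)}(z,q)(t-q)^i$ and reduce mod $p$ to get $\sum_i I^{(i)}(z,q)(t-q)^i$. Taking the coefficient of $(t-q)^{lp-1}$ in the mod‑$p$ reduction of \Ref{i1} gives $\sum_a m_a I^{(lp-1)}_a(z,q)=0$, i.e. equation \Ref{le}, because the right‑hand side $d_t\Phi^{(p)}$ contributes $0$ in degree $lp-1$ by the vanishing lemma above (note $\Phi^{(p)}$ expanded in $(t-q)$ is a polynomial, and its $d_t$ has the factor $lp$ in the relevant coefficient). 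Taking the coefficient of $(t-q)^{lp-1}$ in the mod‑$p$ reduction of the identity for \Ref{ci}: the left‑hand side commutes with the (constant‑in‑$t$) operator $\der/\der z_i-\frac1\ka\sum_{j\ne i}\Omega_{i,j}/(z_i-z_j)$ and with taking the $(t-q)^{lp-1}$‑coefficient, yielding $\big(\der/\der z_i-\frac1\ka\sum_{j\ne i}\Omega_{i,j}/(z_i-z_j)\big)I^{(lp-1)}(z,q)$; the right‑hand side is again a $d_t$ of a polynomial in $t$, so its $(t-q)^{lp-1}$‑coefficient vanishes mod $p$. That is exactly the KZ equation \Ref{KZ} for $I^{(lp-1)}(z,q)$.

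The routine but not entirely trivial step, and the one I would treat most carefully, is the second bullet above: verifying that the cohomological identity for \Ref{ci} really does hold mod $p$ after the substitution $\Phi\rightsquigarrow\Phi^{(p)}$ and interpreting $\der/\der z_i$ as differentiating the polynomial coefficients of $\Phi^{(p)}$. Over $\C$ this identity is a clearing‑denominators computation using $\der_{z_i}\log\Phi=\sum_{b\ne i}\frac{m_im_b}{2\ka}\frac1{z_i-z_b}+\frac{m_i/\ka}{t-z_i}$ and the matching logarithmic derivative of $\Phi^{(p)}$ is $\sum_{b\ne i}M_{i,b}\frac1{z_i-z_b}+M_i\frac1{t-z_i}$; these agree mod $p$ by the defining congruences, so the same algebraic identity goes through verbatim in $\FF_p(z)[t]$ and, after multiplying by the $(t-z_a)$‑powers present in $\Phi^{(p)}$, in $\FF_p[z,t]$. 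One must also check there is no issue with the extra factor $(z_a-z_b)^{M_{a,b}}$ under $\der_{z_i}$ contributing a term not present over $\C$ — it is present over $\C$ too, and it is exactly what produces the diagonal $\Omega_{i,j}$‑entries $m_im_b/2$. Once this identity is in hand, the rest is the formal coefficient‑extraction argument, and the whole proof is a faithful transcription of the proof of Theorem \ref{thm1.1} with $\int_\ga d_t(\,\cdot\,)=0$ replaced by "the $(t-q)^{lp-1}$‑coefficient of $d_t(\,\cdot\,)$ is $0$ in $\FF_p$."
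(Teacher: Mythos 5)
Your proposal is correct and follows essentially the same route as the paper: reduce the cohomological identities \Ref{i1} and \Ref{ci} (with $\Phi$ replaced by $\Phi^{(p)}$, valid mod $p$ by the defining congruences for $M_a, M_{a,b}$), Taylor-expand at $t=q$, and extract the coefficient of $(t-q)^{lp-1}$, which kills the exact terms since $d(t^{lp})/dt = lp\,t^{lp-1}\equiv 0 \pmod p$. The only difference is that you spell out the verification that the identities survive the substitution $\Phi\rightsquigarrow\Phi^{(p)}$ mod $p$, a step the paper leaves implicit.
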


The parameters $q$ and $lp-1$ are analogs of cycles $\ga$ in Section \ref{CaseC}.

\begin{proof}

To prove that $I^{(lp-1)}(z,q)$ satisfies \Ref{le} and \Ref{KZ} we consider the Taylor expansions at $t=q$ of both sides of equations
\Ref{i1} and 
\Ref{ci}, divide them by $dt$, and then project the  coefficients of $(t-q)^{lp-1}$
  to $\FF_p[z]$. The projections of  the right-hand sides equal 
zero since $d(t^{lp})/dt=lpt^{lp-1}\equiv 0$ (mod $p$).
\end{proof}

\begin{exmp}
\label{ex2}
Let  $\ka=2$, $m_1=\dots=m_n=1$, cf. Example \ref{ex1}. 
Given $p>2$ choose the master polynomial
\bean
\Phi^{(p)}(t,z) = \prod_{1\leq a<b\leq n}(z_a-z_b)^{\frac{(p+1)^2}4}\prod_{s=1}^n(t-z_s)^{\frac{p-1}2}.
\eean
Consider the Taylor expansion
\bean
\label{te}
\prod_{s=1}^n(t-z_s)^{\frac{p-1}2} \Big(\frac 1{t-z_1}, \dots, \frac 1{t-z_n}\Big)
= \sum_i \bar c^i(z) t^i,
\eean
where $\bar c^i=(\bar c^i_1, \dots, \bar c^i_n)$. Let $c^i$ be the projection of $\bar c^i$ to $(\FF_p[z])^n$.
Then the vector of polynomials
\bean
\label{3sol}  
\phantom{aaaaaa}
I(z)=
(I_1(z),\dots,I_n(z))= \!\!
\prod_{1\leq a<b\leq n}\!\!
(z_a-z_b)^{\frac{(p+1)^2}4} \left(c^{p-1}_1(z), \dots ,  c^{p-1}_n(z)\right)
\eean
is a solution of the KZ differential equations over $\FF_p[z]$ 
and $I_1(z)+\dots+I_n(z)=0$.

\end{exmp}

\begin{exmp}
\label{ex1.2}

Let $\ka=2$, $m_1=\dots=m_n=1$, $p=3$. We have  
\bea
\Om_{i,j} (I_1,\dots,I_n)&=&\frac12(I_1,\dots,I_{i-1}, -I_i+2I_j, I_{i+1},\dots, I_{j-1}, 2I_i-I_j, I_{j+1},\dots,I_n)
\\
&\equiv & (-I_1,\dots,-I_{i-1}, I_i+I_j, -I_{i+1},\dots, -I_{j-1}, I_i+I_j, -I_{j+1},\dots,-I_n) 
\eea
(mod $3$).  Equation \Ref{le} has the form
$I_1(z)+\dots+I_n(z)=0$.
We may choose the master polynomial 
\bea
\Phi^{(p=3)}(t,z) = \prod_{1\leq a<b\leq n}(z_a-z_b)\prod_{s=1}^n(t-z_s).
\eea
  Choose a nonnegative integer $l$. Then the vector $I(z,q):= I^{(3l-1)}(z,q)=(I_1(z,q),$ \dots, $I_n(z,q))$ of Theorem
\ref{thm 1.1} has  coordinates
\bean
\label{sol}
I_j(z,q)=  \Big(\prod_{1\leq a<b\leq n}(z_a-z_b) \Big)
\sum_{1\leq i_1<\dots<i_{n-3-3l}\leq n, \atop
j\notin \{i_1,\dots,i_{n-3-3l}\}}\prod_{a=1}^{n-3-3l}(q-z_{i_a})
\eean
and is a solution  of \Ref{le} and \Ref{KZ} with values in $(\FF_3[z])^n$ for any $q=0,1,2$. 
 Expanding these solutions into polynomials 
 homogeneous in $z$ we obtain solutions in homogeneous polynomials,
 which stabilize with respect to $n$ as follows. 
 The vector $I^{[r]}(z)=(I_1^{[r]}(z),\dots,I_n^{[r]}(z))$, with coordinates
\bean
\label{sols}
I^{[r]}_j(z)=  \Big(\prod_{1\leq a<b\leq n}(z_a-z_b) \Big)
\sum_{1\leq i_1<\dots<i_{r}\leq n, \atop
j\notin \{i_1,\dots,i_{r}\}}\prod_{a=1}^{r}z_{i_a},
\eean
is a solution  of \Ref{le} and \Ref{KZ} with values in $(\FF_3[z])^n$ if $r\equiv n$ (mod  $3$)
and $r<n$.
Thus, the vector $I^{[0]}(z)$, with coordinates 
 \bean
\label{sols0}
I_j^{[0]}(z)=  \prod_{1\leq a<b\leq n}(z_a-z_b), 
\eean
is a solution  with values in $(\FF_3[z])^n$ for $n\equiv 0$ (mod $3$); the vector $I^{(1)}(z)$,
with coordinates
\bean
\label{sols1}
I_j^{[1]}(z)=  \Big(\prod_{1\leq a<b\leq n}(z_a-z_b) \Big)
\sum_{1 \leq i \leq n, \, i\ne j} z_i ,
\eean
is a solution  
for $n\equiv 1$ (mod $3$) and so on.  Note that the sum in \Ref{sols} is the $m$-th elementary symmetric function in
$z_1,\dots, \widehat{z_j}, \dots,z_n$.

Solutions provided by Theorem \ref{thm 1.1} depend on parameters $q$, $lp-1$. In this example all solutions $I^{[r]}(z)$ can be 
obtained by putting $q=0$ and varying $lp-1$ only. 

\end{exmp}

\subsection{Relation of polynomial solutions to integrals over $\FF_p$} For a polynomial $F(t)\in\FF_p[t]$
define the integral 
\bea
\int_{\FF_p} F(t) := \sum_{t\in\FF_p}F(t).
\eea
Recall that 
\bean
\label{sum=}
\on{the\ sum} \ \ \sum_{t\in\FF_p}t^i\ \  \on{equals}\ \  -1 \ \on{if}\ (p-1)\big| i \
\on{ and\ equals\ zero\ otherwise.}
\eean

\begin{thm}
\label{thm int}  Fix $x_1,\dots,x_n, q\in \FF_p$.
Consider the vector of polynomials 
\bea
F(t,x_1,\dots,x_n):=\Phi^{(p)}(t,x_1,\dots,x_n)\Big(\frac 1{t-x_1}, \dots,\frac 1{t-x_n}\Big)
\ \in\  \FF_p[t]
\eea
 of Section \ref{Case F}.  Assume that
$\deg_t F(t,x_1,\dots,x_n) < 2p-2$. Consider the polynomial solution
 $I^{(p-1)}(z_1,\dots,z_n, q)$ of equations \Ref{le} and \Ref{KZ} defined in front of Theorem
\ref{thm 1.1}. 
Then 
\bean
\label{INT}
I^{(p-1)}(x_1,\dots,x_n, q) = -\int_{\FF_p}F(t,x_1,\dots,x_n) .
\eean
\end{thm}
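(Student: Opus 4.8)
The plan is to show directly that, after specializing $z=x$, the polynomial solution $I^{(p-1)}(z,q)$ of Theorem \ref{thm 1.1} equals $-\int_{\FF_p}F(t,x)$; this is the exact $\FF_p$-analogue of the fact that in the complex case $I^{(\ga)}(z)$ is an honest integral of $\Phi(t,z)V(t,z)$. First I would record that $F(t,x)$ really is a vector of polynomials in $t$: since each $M_a$ is a positive integer, its $j$-th component equals $\prod_{a<b}(x_a-x_b)^{M_{a,b}}\,(t-x_j)^{M_j-1}\prod_{a\ne j}(t-x_a)^{M_a}\in\FF_p[t]$, of degree $M_1+\dots+M_n-1$ in $t$. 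Because reduction modulo $p$, evaluation at $z=x$, and the passage to the expansion in powers of $(t-q)$ all use ring operations only, applying these operations in any order to $\Phi^{(p)}(t,z)\big(\tfrac1{t-z_1},\dots,\tfrac1{t-z_n}\big)=\sum_i\bar I^{(i)}(z,q)(t-q)^i$ gives the same result; hence in $(\FF_p[t])^n$ one has $F(t,x)=\sum_{i\ge0}I^{(i)}(x,q)\,(t-q)^i$.

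Next I would integrate termwise:
\[
\int_{\FF_p}F(t,x)=\sum_{t\in\FF_p}\sum_{i\ge0}I^{(i)}(x,q)\,(t-q)^i=\sum_{i\ge0}I^{(i)}(x,q)\sum_{t\in\FF_p}(t-q)^i .
\]
Since $t\mapsto t-q$ is a bijection of $\FF_p$, the inner sum equals $\sum_{s\in\FF_p}s^i$, which by \Ref{sum=} is $-1$ when $i\ge1$ and $(p-1)\mid i$ and is $0$ otherwise — the case $i=0$ being treated separately by $\sum_{t\in\FF_p}1=p\equiv0$. Consequently $\int_{\FF_p}F(t,x)=-\sum_{l\ge1}I^{(l(p-1))}(x,q)$.

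Finally I would invoke the degree hypothesis: $\deg_tF(t,x)<2p-2=2(p-1)$ forces $I^{(i)}(x,q)=0$ for every $i\ge2(p-1)$, so only the $l=1$ term survives and $\int_{\FF_p}F(t,x)=-I^{(p-1)}(x,q)$, which is \Ref{INT}. I do not expect a genuine obstacle here; the only delicate points are the bookkeeping that reduction mod $p$, specialization $z\mapsto x$ and Taylor expansion at $t=q$ commute, and the separate handling of the $i=0$ harmonic. The conceptual content is that the degree bound is exactly what is needed to discard the higher ``Frobenius harmonics'' $I^{(2(p-1))},I^{(3(p-1))},\dots$, which otherwise would be the difference between the polynomial solution $I^{(p-1)}$ and the finite-field integral.
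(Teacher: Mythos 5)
Your proof is correct and follows essentially the same route as the paper: expand $F(t,x)$ in powers of $(t-q)$, sum over $t\in\FF_p$, apply \Ref{sum=} to kill all terms except those with $(p-1)\mid i$, $i\ge 1$, and use the degree bound $\deg_t F<2p-2$ to leave only the $i=p-1$ coefficient. Your extra care about the commutation of reduction mod $p$, specialization $z\mapsto x$, and Taylor expansion, and about the $i=0$ term, merely makes explicit what the paper leaves implicit.
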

This integral is a $p$-analog of the hypergeometric integral \Ref{s}.

\begin{proof} Consider the Taylor expansion  $F(t,x_1,\dots,x_n) =  \sum_{i=0}^{2p-3} I^{(i)}(x_1,\dots,x_n,q)(t-q)^i$. By formula
\Ref{sum=}, we have $\sum_{t\in\FF_p}F(t,x_1,\dots,x_n)=-I^{(p-1)}(x_1,\dots,x_n, q)$.
\end{proof}

\begin{exmp}
\label{exI}
Given $\ka$, $n$, $m_1=\dots=m_n=1$, assume that $n\leq 2\ka$ and $\ka\big| (p-1)$. Then
$F(t) = \prod_{a<b}(z_a-z_b)^{M_{a,b}}\prod_{s=1}^n(t-x_s)^{\frac{p-1}\ka}\big(\frac1{t-x_1},
\dots,\frac1{t-x_n}\big)$ and $\deg_tF(t)<2p-2$.

\end{exmp}

\subsection{Relation of solutions to  curves over $\FF_p$}  
\label{sec1.3}

\begin{exmp}
\label{ex131}

Let $x_1,x_2,x_3\in\FF_p$. Let
$\Ga(x_1,x_2,x_3)$ be
the projective closure of the affine  curve
\bean
\label{ec}
y^2 = (t-x_1)(t-x_2)(t-x_3)
\eean
over $\FF_p$. 
For a rational function $h : \Ga(x_1,x_2,x_3)\to \FF_p$  define the integral  
\bean
\label{int}
\int_{\Ga(x_1,x_2,x_3)} h = {\sum'_{P\in\Ga(x_1,x_2,x_3)}}\,h(P),
\eean
as the sum over all points $P\in\Ga(x_1,x_2,x_3)$, where $h(P)$  is defined.

\begin{thm}
\label{thm pts} 
Let $p>2$ be a prime.
Let $\left( c^{p-1}_1(x_1,x_2,x_3),  c^{p-1}_2(x_1,x_2,x_3),  c^{p-1}_3(x_1,x_2,x_3)\right)$ 
be the vector of polynomials appearing in the solution 
\Ref{3sol} of the KZ equations of Example \ref{ex2} for $n=3$. Then 
\bean
\label{pts=sol} 
\int_{\Ga(x_1,x_2,x_3)}\frac 1{t-x_j}
 =   - \, c^{p-1}_j(x_1,x_2,x_3),
\qquad j=1,2,3.
\eean
\end{thm}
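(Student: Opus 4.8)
The plan is to reduce the sum over the curve $\Ga(x_1,x_2,x_3)$ to a sum over $t\in\FF_p$, and then to invoke Theorem~\ref{thm pts}'s close cousin, Theorem~\ref{thm int} (with $\ka=2$, $n=3$, $m_1=m_2=m_3=1$), together with the explicit form of the solution in Example~\ref{ex2}. Concretely, a point $P$ of the affine curve $y^2=(t-x_1)(t-x_2)(t-x_3)$ lying over a given $t\in\FF_p$ exists precisely when the right-hand side $g(t):=(t-x_1)(t-x_2)(t-x_3)$ is a square in $\FF_p$; there are two such points when $g(t)$ is a nonzero square, one when $g(t)=0$, and none when $g(t)$ is a nonsquare. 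For a function $h$ that depends only on $t$ (as $\frac1{t-x_j}$ does, where it is defined), the contribution of the two points over $t$ is $2h(t)$ when $g(t)$ is a nonzero square and $h(t)$ when $g(t)=0$; in all cases this equals $\bigl(1+\chi(g(t))\bigr)h(t)$, where $\chi$ is the quadratic (Legendre) character of $\FF_p$ extended by $\chi(0)=0$. The points at infinity contribute nothing since $\frac1{t-x_j}$ vanishes there (for $p>3$; for $p=3$ one checks the finitely many cases directly, or notes the hypotheses of Example~\ref{exI} still force the degree bound).

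Next I would use the standard fact that for $a\in\FF_p^\times$ the Legendre symbol satisfies $\chi(a)\equiv a^{(p-1)/2}\pmod p$, and $\chi(0)=0$ is also $\equiv 0^{(p-1)/2}$ with the convention $0^0$ irrelevant here since the ``$1$'' term already accounts for $t$ with $g(t)=0$. Hence, working in $\FF_p$,
\[
\int_{\Ga(x_1,x_2,x_3)}\frac1{t-x_j}
\ =\ \sum_{\substack{t\in\FF_p\\ t\ne x_j}}\bigl(1+g(t)^{(p-1)/2}\bigr)\frac1{t-x_j}.
\]
The term $\sum_{t\ne x_j}\frac1{t-x_j}=\sum_{u\in\FF_p^\times}u^{-1}=\sum_{u\in\FF_p^\times}u^{p-2}=0$ by \Ref{sum=}, since $p-2$ is not divisible by $p-1$ for $p>2$. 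So the curve integral collapses to
\[
\int_{\Ga(x_1,x_2,x_3)}\frac1{t-x_j}
\ =\ \sum_{t\in\FF_p} g(t)^{(p-1)/2}\,\frac1{t-x_j}
\ =\ \sum_{t\in\FF_p} F_j(t,x_1,x_2,x_3),
\]
where $F_j$ is exactly the $j$-th component of the vector $F(t)=\Phi^{(p)}(t,x)\bigl(\frac1{t-x_1},\dots\bigr)$ from Example~\ref{ex2}, \emph{after} discarding the global prefactor $\prod_{a<b}(x_a-x_b)^{(p+1)^2/4}$ (that factor is the master-polynomial constant and does not involve $t$, so it can be reinstated at the end). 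Then Theorem~\ref{thm int}, whose degree hypothesis $\deg_t F<2p-2$ holds here because $\deg_t\bigl(g(t)^{(p-1)/2}/(t-x_j)\bigr)=\tfrac{3(p-1)}2-1<2p-2$ for $p>2$, gives $\sum_{t\in\FF_p}F_j(t,x)=-I^{(p-1)}_j(x_1,x_2,x_3,q)=-c^{p-1}_j(x_1,x_2,x_3)\prod_{a<b}(x_a-x_b)^{(p+1)^2/4}$, up to matching the notation of \Ref{3sol}. Comparing the two computations of the same quantity yields \Ref{pts=sol}.

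The main obstacle is bookkeeping rather than conceptual: one must be careful that the formula $\#\{P: P\ \text{over}\ t\}=1+\chi(g(t))$ is applied with the right multiplicities (the single point when $g(t)=0$, the two points when $g(t)$ is a nonzero square), that the points at infinity genuinely contribute zero to $\int\frac1{t-x_j}$, and that the global prefactor $\prod_{a<b}(x_a-x_b)^{(p+1)^2/4}$ is tracked consistently between the ``geometric'' side and the combinatorial side so that it cancels cleanly and the final identity has no stray constant. A secondary subtlety is the small-prime case $p=3$ (and checking $x_1,x_2,x_3$ need not be distinct), where the degree and infinity arguments should be verified directly; but since Example~\ref{exI} with $\ka=2$, $n=3$ already guarantees $\deg_t F<2p-2$ whenever $2\mid(p-1)$, i.e.\ for all odd $p$, this case is covered uniformly.
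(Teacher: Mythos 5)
Your proposal is correct and follows essentially the same route as the paper: count the affine points over each $t$ via $1+\chi(g(t))$ with $\chi(g(t))=g(t)^{(p-1)/2}$, discard the infinite point, kill the $\sum_{t\ne x_j}(t-x_j)^{-1}=\sum_u u^{p-2}=0$ term, and extract the coefficient $c^{p-1}_j$ from the remaining polynomial sum via \Ref{sum=}. The only cosmetic difference is that you route the last step through Theorem \ref{thm int} (which is itself just \Ref{sum=}) and explicitly track the prefactor $\prod_{a<b}(x_a-x_b)^{(p+1)^2/4}$, whereas the paper applies \Ref{sum=} directly to the Taylor expansion without the prefactor.
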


\begin{rem}
Theorems  \ref{thm 1.1}  and \ref{thm pts}  say that the integrals $\int_{\Ga(x_1,x_2,x_3)}\frac 1{t-x_j}$
are polynomials in $x_1,x_2,x_3\in\FF_p$ and the triple of polynomials
\bea
I(x_1,x_2,x_3)=
 \!\!
\prod_{1\leq a<b\leq 3}\!\!
(x_a-x_b)^{\frac{(p+1)^2}4}
 \left( \int_{\Ga(x_1,x_2,x_3)}\frac 1{t-x_1},  \int_{\Ga(x_1,x_2,x_3)}\frac 1{t-x_2},  \int_{\Ga(x_1,x_2,x_3)}
 \frac 1{t-x_3}\right)
\eea
in these discrete variables
satisfies the KZ differential equations! Cf. Example \ref{ex1}.

\end{rem}

\noindent
{\it Proof of Theorem \ref{thm pts}.}
The proof is analogous to the reasoning in \cite[Section 2]{Ma} and \cite{Cl}. 
The value of $1/(t-x_j)$ at the infinite point of $\Ga(x_1,x_2,x_3)$ equals zero. 
It is easy to see that 
\bea
\int_{\Ga(x_1,x_2,x_3)}\frac 1{t-x_j}
& = &
\sum_{t\in\FF_p,\, t\ne x_j} \frac 1{t-x_j} 
+\sum_{t\in\FF_p} \frac {1}
{t-x_j}\prod_{s=1}^3(t-x_s)^{\frac{p-1}2}
\\
&=&  \sum_{t\in\FF_p} (t-x_j)^{p-2} 
 + \sum_{t\in\FF_p}  \sum_i  c^i_j(x_1,x_2,x_3) t^i
=   -  c^{p-1}_j(x_1,x_2,x_3),
\eea
where the last equality is by formula \Ref{sum=}.
\qed

\begin{rem}
In \cite[Section 2]{Ma} and in \cite{Cl}, an
equation analogous to
\Ref{pts=sol}   is considered, where the left-hand side is the number of points
on $\Ga(x_1,x_2,x_3)$ over $\FF_p$ and the right-hand side is the reduction modulo $p$ of a solution of a 
second order Euler hypergeometric differential equation. Notice that the number of points on $\Ga(x_1,x_2,x_3)$
 is the discrete integral over $\Ga(x_1,x_2,x_3)$ of the 
constant function $h=1$.  See details in Section ``Manin's Result: The Unity of Mathematics'' in \cite{Cl}.

\end{rem}

\end{exmp}

\begin{exmp}
\label{ex132}  This example is a variant of  Example \ref{ex131}.

Let $x_1,x_2,x_3,x_4\in\FF_p$. Let
$\Ga(x_1,x_2,x_3,x_4)$ be
the projective closure of the affine  curve
\bean
\label{ec}
y^2 = (t-x_1)(t-x_2)(t-x_3)(t-x_4)
\eean
over $\FF_p$. 

Let $p>3$ be a prime.
Let 
\bea
\left( c^{p-1}_1(x_1,x_2,x_3,x_4),  c^{p-1}_2(x_1,x_2,x_3,x_4),  c^{p-1}_3(x_1,x_2,x_3,x_4),
 c^{p-1}_4(x_1,x_2,x_3,x_4)\right)
\eea
be the vector of polynomials appearing in the solution 
\Ref{3sol} of the KZ equations of Example \ref{ex2} for $n=4$. Then 
\bean
\label{pts=2sol} 
\int_{\Ga(x_1,x_2,x_3,x_4)}\frac 1{t-x_j}
& = &  - \,c^{p-1}_j(x_1,x_2,x_3,x_4),
\qquad
j=1,2,3,4.
\eean

\end{exmp}

\begin{exmp}
\label{ex133}
Let  $\ka=3$, $n=3$, $m_1=m_2=m_3=2$. Assume that $3\big| (p-1)$.
Choose the master polynomial 
\bea
\Phi^{(p)}(t,z)=\prod_{1\leq a<b\leq 3}(z_a-z_b)^{\frac{p+2}3}\prod_{s=1}^3(t-z_s)^{2\frac{p-1}3}.
\eea
Consider the Taylor expansion
\bean
\label{te}
\prod_{s=1}^3(t-z_s)^{2\frac{p-1}3} \Big(\frac 1{t-z_1}, \frac 1{t-z_2}, \frac 1{t-z_3}\Big)
= \sum_i \bar c^i(z_1,z_2,z_3) t^i,
\eean
where $\bar c^i=(\bar  c^i_1, \bar c^i_2, \bar c^i_3)$. Let $ c^i$ be the projection of $\bar c^i$ to $(\FF_p[z])^3$.
Then the vector 
\bean
\label{4sol}  
\phantom{aaaaaa}
I(z)=
(I_1(z),I_2(z),I_3(z))= \!\!
\prod_{1\leq a<b\leq 3}\!\!
(z_a-z_b)^{\frac{p+2}3} \left( c^{p-1}_1(z),  c^{p-1}_2(z) ,  c^{p-1}_3(z)\right)
\eean
is a solution of the corresponding  KZ differential equations over $\FF_p[z]$ 
and $I_1(z)+I_2(z)+I_3(z)=0$.

For distinct $x_1,x_2,x_3\in\FF_p$ let
$\Ga(x_1,x_2,x_3)$ be
the projective closure of the affine 
\bean
\label{ec}
y^3 = (t-x_1)(t-x_2)(t-x_3)
\eean
over $\FF_p$. The curve has 3 points at infinity.

\begin{thm}
\label{thm pts4} 
Let $p$ be a prime such that $3\big|(p-1)$.
Let 
\bea
\left( c^{p-1}_1(x_1,x_2,x_3),  c^{p-1}_2(x_1,x_2,x_3),  c^{p-1}_3(x_1,x_2,x_3)\right)
\eea
be the vector of polynomials appearing in the solution 
\Ref{4sol} of the KZ equations. Then for $j=1,2,3$ we have
\bean
\label{pts=3sol} 
\int_{\Ga(x_1,x_2,x_3)}\frac 1{t-x_j}
& = &  - \, c^{p-1}_j(x_1,x_2,x_3).
\eean
\end{thm}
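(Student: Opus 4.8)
The plan is to follow the proof of Theorem~\ref{thm pts} line by line, with the quadratic point count replaced by a cubic one; recall that $x_1,x_2,x_3$ are distinct and $p\equiv 1\pmod 3$, so in particular $p\ge 7$. First I would discard the three points of $\Ga(x_1,x_2,x_3)$ at infinity: at each of them $t=\infty$, so $1/(t-x_j)$ is regular and vanishes there, contributing nothing to $\int_{\Ga(x_1,x_2,x_3)}\frac1{t-x_j}$. It then remains to sum $1/(t-x_j)$ over the affine points $(t,y)$ with $y^{3}=c(t)$, where $c(t):=(t-x_1)(t-x_2)(t-x_3)$ and $t\in\FF_p$. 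Over $t=x_s$ there is exactly one such point, $(x_s,0)$: the one with $s=j$ is dropped, since $1/(t-x_j)$ has a pole there, while for $s\ne j$ it contributes $1/(x_s-x_j)$. For $t\notin\{x_1,x_2,x_3\}$ the fiber has $N(t):=\#\{y\in\FF_p:y^{3}=c(t)\}$ points, each contributing $1/(t-x_j)$.

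The only genuinely new input is the congruence $N(t)\equiv 1+c(t)^{(p-1)/3}+c(t)^{2(p-1)/3}\pmod p$, valid for every $t\in\FF_p$. It holds because $3\mid p-1$: if $c(t)$ is a nonzero cube, each of the three terms equals $1$ and $N(t)=3$; if $c(t)$ is a non-cube, $c(t)^{(p-1)/3}$ is a primitive cube root of unity in $\FF_p$, so the three terms sum to $0=N(t)$; and if $c(t)=0$, both sides are $1$ (using $(p-1)/3\ge 1$). Substituting this, merging the constant term $1$ of $N(t)$ with the contributions of the points $(x_s,0)$, $s\ne j$, into a single sum over all $t\ne x_j$ (this uses the distinctness of the $x_s$), and noting that the terms containing $c(t)^{k(p-1)/3}$ vanish at $t=x_s$ for $s\ne j$ (there $c(t)=0$), I would arrive at the identity, with $t$ ranging over $\FF_p\setminus\{x_j\}$ in all three sums,
\[
\int_{\Ga(x_1,x_2,x_3)}\frac1{t-x_j}
=\sum_{t\ne x_j}\frac1{t-x_j}
+\sum_{t\ne x_j}\frac{c(t)^{(p-1)/3}}{t-x_j}
+\sum_{t\ne x_j}\frac{c(t)^{2(p-1)/3}}{t-x_j}.
\]
The first sum is $\sum_{u\in\FF_p^{*}}u^{p-2}=0$ by \Ref{sum=}, since $p-1\nmid p-2$.

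For $k=1,2$, the function $c(t)^{k(p-1)/3}/(t-x_j)$ is actually the polynomial $(t-x_j)^{k(p-1)/3-1}\prod_{s\ne j}(t-x_s)^{k(p-1)/3}$, which vanishes at $t=x_j$ since $k(p-1)/3-1\ge 1$ (for $k=1$ this uses $p\ge 7$); hence the sum over $t\ne x_j$ equals the sum over all $t\in\FF_p$, which by \Ref{sum=} is minus the coefficient of $t^{\,p-1}$ in that polynomial. For $k=1$ the polynomial has degree $p-2$, so there is no $t^{\,p-1}$-term and the sum is $0$. For $k=2$ the polynomial is exactly $\prod_{s=1}^{3}(t-x_s)^{2(p-1)/3}\cdot\frac1{t-x_j}$, whose coefficient of $t^{\,p-1}$ is by definition $c^{p-1}_j(x_1,x_2,x_3)$ and whose degree $2p-3$ makes $p-1$ the only positive multiple of $p-1$ in range; so this sum equals $-c^{p-1}_j(x_1,x_2,x_3)$. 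Adding the three pieces yields \Ref{pts=3sol}. The main obstacle is purely the bookkeeping---keeping straight which points of $\Ga(x_1,x_2,x_3)$ enter the discrete integral, and the degree count that makes \Ref{sum=} pick out exactly the coefficient $c^{p-1}_j$; the cubic point-count congruence is the one new ingredient compared with Theorem~\ref{thm pts}, and it is elementary.
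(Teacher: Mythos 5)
Your proof is correct and follows essentially the same route as the paper: the paper's "it is easy to see" step is exactly your point-count congruence $N(t)\equiv 1+c(t)^{(p-1)/3}+c(t)^{2(p-1)/3}\pmod p$, which splits the discrete integral into the same three sums, with the $k=1$ sum vanishing for degree reasons and the $k=2$ sum yielding $-c^{p-1}_j$ via \Ref{sum=}. You have merely made explicit the bookkeeping at the points $(x_s,0)$ and at infinity that the paper leaves implicit.
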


\begin{proof}

The value of $1/(t-x_j)$ at infinite points of $\Ga$ equals zero. 
It is easy to see that 
\bean
\label{3lines}
&&
\int_{\Ga(x_1,x_2,x_3)}\frac 1{t-x_j}
 =
\sum_{t\in\FF_p,\, t\ne x_j} \frac 1{t-x_j} 
\\
\notag
&&
\phantom{aa}
+\sum_{t\in\FF_p} \frac {1}
{t-x_j}\prod_{s=1}^3(t-x_s)^{\frac{p-1}3}
+\sum_{t\in\FF_p} \frac {1}
{t-x_j}\prod_{s=1}^3(t-x_s)^{2\frac{p-1}3}
\\
\notag
&&
\phantom{aaaaa}
=  \sum_{t\in\FF_p} (t-x_j)^{p-2} 
 + \sum_{t\in\FF_p}  \sum_i  c^i_j(x_1,x_2,x_3) t^i
=    -  c^{p-1}_j(x_1,x_2,x_3). 
\eean
Notice that $\sum_{t\in\FF_p} \frac {1}
{t-x_j}\prod_{s=1}^3(t-x_s)^{\frac{p-1}3}=0$ since the polynomial under the sum is of degree 
$p-2$ which is less than $p-1$. The last equality in \Ref{3lines} is by formula \Ref{sum=}.
\end{proof}

\end{exmp}

\begin{exmp}
\label{ex135}
Let  $\ka=3$, $n=3$, $m_1=m_2=1$, $m_3=2$. Assume that $3$  divides $p-1$.
Choose the master polynomial 
\bea
\Phi^{(p)}(t,z)=(z_1-z_2)^{\frac{5p+1}6}(z_1-z_3)^{\frac{2p+1}3}(z_2-z_3)^{\frac{2p+1}3}
(t-z_1)^{\frac{p-1}3}(t-z_2)^{\frac{p-1}3}(t-z_3)^{2\frac{p-1}3}.
\eea
Consider the Taylor expansion
\bean
\label{te2}
\phantom{aaa}
(t-z_1)^{\frac{p-1}3}(t-z_2)^{\frac{p-1}3}(t-z_3)^{2\frac{p-1}3} \Big(\frac 1{t-z_1}, \frac 1{t-z_2}, \frac 1{t-z_3}\Big)
= \sum_i \bar b^i(z_1,z_2,z_3) t^i,
\eean
where $\bar b^i=(\bar  b^i_1, \bar b^i_2, \bar b^i_3)$. Let $ b^i$ be the projection of $\bar b^i$ to $(\FF_p[z])^3$.
Then the vector 
\bean
\label{44sol}  
\phantom{aaaaaa}
I(z)=
(z_1-z_2)^{\frac{5p+1}6}(z_1-z_3)^{\frac{2p+1}3}(z_2-z_3)^{\frac{2p+1}3}
\left( b^{p-1}_1(z),  b^{p-1}_2(z) ,  b^{p-1}_3(z)\right)
\eean
is a solution of the corresponding KZ differential equations over $\FF_p[z]$ 
and $I_1(z)+I_2(z)+2I_3(z)=0$.

\medskip

Similarly
let  $\ka=3$, $n=3$,  $m_1=m_2=2$, $m_3=1$. Assume that $3$  divides $p-1$.
Choose the master polynomial 
\bea
\Phi^{(p)}(t,z)=(z_1-z_2)^{\frac{p+2}3}(z_1-z_3)^{\frac{2p+1}3}(z_2-z_3)^{\frac{2p+1}3}
(t-z_1)^{2\frac{p-1}3}(t-z_2)^{2\frac{p-1}3}(t-z_3)^{\frac{p-1}3}.
\eea
Consider the Taylor expansion
\bean
\label{te3}
\phantom{aaa}
(t-z_1)^{2\frac{p-1}3}(t-z_2)^{2\frac{p-1}3}(t-z_3)^{\frac{p-1}3} \Big(\frac 1{t-z_1}, \frac 1{t-z_2}, \frac 1{t-z_3}\Big)
= \sum_i \bar c^i(z_1,z_2,z_3) t^i,
\eean
where $\bar c^i=(\bar  c^i_1, \bar c^i_2, \bar c^i_3)$. Let $ c^i$ be the projection of $\bar c^i$ to $(\FF_p[z])^3$.
Then the vector 
\bean
\label{5sol}  
\phantom{aaaaaa}
I(z)=
(z_1-z_2)^{\frac{p+2}3}(z_1-z_3)^{\frac{2p+1}3}(z_2-z_3)^{\frac{2p+1}3}
\left( c^{p-1}_1(z),  c^{p-1}_2(z) ,  c^{p-1}_3(z)\right)
\eean
is a solution of the corresponding KZ differential equations over $\FF_p[z]$ 
and $2I_1(z)+2I_2(z)+I_3(z)=0$.

\medskip

For distinct $x_1,x_2,x_3\in\FF_p$ let
$\Ga(x_1,x_2,x_3)$ be
the projective closure of the affine curve
\bean
\label{ec}
y^3 = (t-x_1)(t-x_2)(t-x_3)^2
\eean
over $\FF_p$. The curve has genus 2 and one point at infinity.

\begin{thm}
\label{thm pts5} 
Let $p$ be a prime such that 3 divides $p-1$.
Let 
\bea
\left( b^{p-1}_1(x_1,x_2,x_3),  b^{p-1}_2(x_1,x_2,x_3),  b^{p-1}_3(x_1,x_2,x_3)\right)
\eea
be the vector of polynomials appearing in the solution 
\Ref{44sol} of the KZ equations with  $n=3$, $\ka=3$, $m_1=m_2=1$, $m_3=2$.
Let 
\bea
\left( c^{p-1}_1(x_1,x_2,x_3),  c^{p-1}_2(x_1,x_2,x_3),  c^{p-1}_3(x_1,x_2,x_3)\right)
\eea
be the vector of polynomials appearing in the solution 
\Ref{5sol} of the KZ equations with  $n=3$, $\ka=3$, $m_1=m_2=2$, $m_3=1$.
Then for $j=1,2,3$ we have
\bean
\label{pts=5sol} 
\int_{\Ga(x_1,x_2,x_3)}\frac 1{t-x_j}
& = &  - \, b^{p-1}_j(x_1,x_2,x_3) - \, c^{p-1}_j(x_1,x_2,x_3).
\eean
\end{thm}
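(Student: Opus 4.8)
The plan is to repeat, with one extra twist, the point-counting argument used for Theorems \ref{thm pts} and \ref{thm pts4}; the twist is a Fermat-type reduction of exponents modulo $t^p-t$, forced by the double root of $f(t):=(t-x_1)(t-x_2)(t-x_3)^2$. Fix distinct $x_1,x_2,x_3\in\FF_p$. Since $3\mid p-1$, $\FF_p$ contains the cube roots of unity, so for every $a\in\FF_p$ one has $\#\{y\in\FF_p:y^3=a\}=1+a^{(p-1)/3}+a^{2(p-1)/3}$: for $a\ne0$ the two power sums are the values of the nontrivial cubic characters of $\FF_p^*$ (giving $3$ if $a$ is a cube and $0$ otherwise), and for $a=0$ both vanish and the count is $1$. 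As $1/(t-x_j)$ vanishes at the unique point of $\Ga(x_1,x_2,x_3)$ at infinity, grouping the affine points of $\Ga(x_1,x_2,x_3)$ by their $t$-coordinate gives
\bea
\int_{\Ga(x_1,x_2,x_3)}\frac1{t-x_j}
\ =\ \sum_{t\in\FF_p,\ t\ne x_j}\frac{1+f(t)^{(p-1)/3}+f(t)^{2(p-1)/3}}{t-x_j}
\ =\ S_0+S_1+S_2 ,
\eea
where $S_0=\sum_{t\ne x_j}(t-x_j)^{-1}$, $S_1=\sum_{t\ne x_j}f(t)^{(p-1)/3}/(t-x_j)$ and $S_2=\sum_{t\ne x_j}f(t)^{2(p-1)/3}/(t-x_j)$.

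The sums $S_0$ and $S_1$ are disposed of exactly as in the proofs of Theorems \ref{thm pts} and \ref{thm pts4}. For $S_0$: as $t$ ranges over $\FF_p\setminus\{x_j\}$ the quantity $t-x_j$ ranges over $\FF_p^*$, and $\sum_{u\in\FF_p^*}u^{-1}=\sum_{u\in\FF_p^*}u=0$ since $p>2$. For $S_1$: the product $f(t)^{(p-1)/3}=(t-x_1)^{(p-1)/3}(t-x_2)^{(p-1)/3}(t-x_3)^{2(p-1)/3}$ is exactly the one whose $t$-Taylor coefficients define the polynomials $b^i_j$ in the first part of Example \ref{ex135}; dividing by $t-x_j$ gives an honest polynomial $\beta_j(t)=\sum_i b^i_j(x_1,x_2,x_3)\,t^i$, which vanishes at $t=x_j$ (the exponent of $t-x_j$ in $\beta_j$ is $\ge1$, since $3\mid p-1$ forces $p\ge7$). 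Hence $S_1=\sum_{t\in\FF_p}\beta_j(t)$, and because $\deg\beta_j=4(p-1)/3-1<2(p-1)$, formula \Ref{sum=} extracts only the coefficient of $t^{p-1}$, giving $S_1=-b^{p-1}_j(x_1,x_2,x_3)$.

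The sum $S_2$ is the crux. Expanding $f(t)^{2(p-1)/3}=(t-x_1)^{2(p-1)/3}(t-x_2)^{2(p-1)/3}(t-x_3)^{4(p-1)/3}$ and dividing by $t-x_j$ would give a polynomial of degree $8(p-1)/3-1>2(p-1)$, so \Ref{sum=} could no longer single out one coefficient. The fix is to reduce the exponent of $t-x_3$ beforehand: since $4(p-1)/3\equiv(p-1)/3\pmod{p-1}$ with both exponents positive, $u^{4(p-1)/3}=u^{(p-1)/3}$ for all $u\in\FF_p$, so as a function on $\FF_p$,
\bea
f(t)^{2(p-1)/3}\ =\ (t-x_1)^{2(p-1)/3}(t-x_2)^{2(p-1)/3}(t-x_3)^{(p-1)/3} ,
\eea
and the right-hand side is precisely the product whose $t$-Taylor coefficients define the polynomials $c^i_j$ in the second part of Example \ref{ex135}. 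Dividing it by $t-x_j$ gives a polynomial $\gamma_j(t)=\sum_i c^i_j(x_1,x_2,x_3)\,t^i$ with $\gamma_j(x_j)=0$ and $\deg\gamma_j=5(p-1)/3-1<2(p-1)$, whence $S_2=\sum_{t\in\FF_p}\gamma_j(t)=-c^{p-1}_j(x_1,x_2,x_3)$ by \Ref{sum=}. Adding the three contributions yields $\int_{\Ga(x_1,x_2,x_3)}\frac1{t-x_j}=-b^{p-1}_j(x_1,x_2,x_3)-c^{p-1}_j(x_1,x_2,x_3)$.

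The only genuine obstacle is the bookkeeping in $S_2$ just described: recognizing that the naive polynomial $f(t)^{2(p-1)/3}/(t-x_j)$ has degree too large for \Ref{sum=} to isolate a single Taylor coefficient, and that reducing $(t-x_3)^{4(p-1)/3}$ to $(t-x_3)^{(p-1)/3}$ on $\FF_p$ is exactly what identifies $S_2$ with the ``conjugate'' master polynomial of Example \ref{ex135} and thereby produces the term $c^{p-1}_j$. Everything else (the fiber count, the vanishing of $S_0$, the degree estimates, the vanishing of $\beta_j$ and $\gamma_j$ at $t=x_j$, and the contribution from the point at infinity) is routine and parallels the proofs of Theorems \ref{thm pts} and \ref{thm pts4}.
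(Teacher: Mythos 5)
Your proposal is correct and follows essentially the same route as the paper: count the fiber over each $t$ via $1+f(t)^{(p-1)/3}+f(t)^{2(p-1)/3}$, kill the first sum by \Ref{sum=}, identify the second with the Taylor expansion \Ref{te2} to get $-b^{p-1}_j$, and reduce $(t-x_3)^{4(p-1)/3}$ to $(t-x_3)^{(p-1)/3}$ as a function on $\FF_p$ so that the third sum matches \Ref{te3} and yields $-c^{p-1}_j$. The exponent reduction you flag as the crux is exactly the step the paper performs (silently, between the displayed lines), and your added care about degrees and vanishing at $t=x_j$ only makes the argument more explicit.
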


\begin{proof}

The value of $1/(t-x_j)$ at infinite points of $\Ga$ equals zero. 
It is easy to see that 
\bea
&&
\int_{\Ga(x_1,x_2,x_3)}\frac 1{t-x_j}
 = 
\sum_{t\in\FF_p,\, t\ne x_j} \frac 1{t-x_j} 
+\sum_{t\in\FF_p} \frac 1{t-x_j}(t-z_1)^{\frac{p-1}3}(t-z_2)^{1\frac{p-1}3}(t-z_3)^{2\frac{p-1}3}
\\
&&
\phantom{a}
+\sum_{t\in\FF_p} \frac 1{t-x_j}(t-z_1)^{2\frac{p-1}3}(t-z_2)^{2\frac{p-1}3}(t-z_3)^{4\frac{p-1}3}
=  \sum_{t\in\FF_p} (t-x_j)^{p-2} 
\\
&&
\phantom{aa}
+ \sum_{t\in\FF_p}  \sum_i  b^i_j(x_1,x_2,x_3) t^i
+\sum_{t\in\FF_p} \frac 1{t-x_j}(t-z_1)^{2\frac{p-1}3}(t-z_2)^{2\frac{p-1}3}(t-z_3)^{\frac{p-1}3}
\\
&&
\phantom{aaa}
=
-\,b^{p-1}_j(x_1,x_2,x_3) + \sum_{t\in\FF_p}  \sum_i  c^i_j(x_1,x_2,x_3) t^i
=
- b^{p-1}_j(x_1,x_2,x_3) - c^{p-1}_j(x_1,x_2,x_3).
\eea
\end{proof}

\end{exmp}

\subsection{Resonances over $\C$ and $\FF_p$}
Under assumptions of Section \ref{CaseC} assume that
\bean
\label{res1}
m_1+\dots+m_n=\ka.
\eean
Then the vector $I^{(\ga)}(z)$, defined in \Ref{Iga},
 in addition to the algebraic equation \Ref{le} and differential equations \Ref{KZ} satisfies  the algebraic equation
\bean
\label{res2}
z_1m_1I_1(z)+\dots+z_nm_nI_n(z) = 0.
\eean
Equation \Ref{res2} follows from the  cohomological relation:
\bean
\label{crr}
d_t  (t\Phi)
& = & 
 \Phi dt  -  \Phi \sum_{j=1}^n 
\frac{m_j}\kappa\,\frac{t-z_j+z_j}{t-z_j}  dt
\\
\notag
&= &
\Big( 1 - \sum_{j=1}^n\, \frac{m_j}\ka \Big)\Phi dt
 -  \sum_{j=1}^n  z_j  \frac{m_j} {\ka}\Phi
\frac{dt}{t-z_j} .
\eean

 Relation \Ref{res2} manifests  resonances in conformal field theory,
  where  solutions of KZ equations
represent conformal blocks and conformal blocks satisfy algebraic equations  
analogous to \Ref{res2}, see \cite{FSV1, FSV2}, Section
3.6.2 in \cite{V2}.
 In conformal field theory the numbers
$m_1, $ $\dots ,$ $ m_n, $\ $\kappa$ are natural numbers.
 In that case the master function
$\Phi(t,z)$ is an algebraic function and the hypergeometric integrals
become integrals of algebraic forms over cycles lying on suitable algebraic varieties.
 The monodromy of the hypergeometric integrals $I^{(\ga)}(z)$ in that case 
was studied in Sections 13 and 14 of \cite{V1}.

\medskip
Relation \Ref{res2} has an analog over $\FF_p$.

\begin{thm}
Under assumptions of Theorem \ref{thm 1.1} let $ I^{(lp-1)}(z,q)\in\FF_p[z]^n$ be
the polynomial solution of equations \Ref{le} and \Ref{KZ} described in
Theorem  \ref{thm 1.1}.  Assume that
\bean
\label{res1p}
M_1+\dots+M_n\equiv -1 \ \  (\on{mod}\ p).
\eean
Then 
\bean
\label{res2p}
z_1M_1I_1(z)+\dots+z_nM_nI_n(z) = 0.
\eean
\end{thm}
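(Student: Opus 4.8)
The plan is to imitate the proof of Theorem \ref{thm 1.1}, replacing the cohomological identity \Ref{ci} by the identity \Ref{crr} underlying the resonance relation \Ref{res2}. First I would write down the $\FF_p$-analog of \Ref{crr}: starting from the master polynomial $\Phi^{(p)}(t,z)=\prod_{a<b}(z_a-z_b)^{M_{a,b}}\prod_a(t-z_a)^{M_a}$, differentiate $t\,\Phi^{(p)}(t,z)$ with respect to $t$. Since $\der_t\Phi^{(p)}=\Phi^{(p)}\sum_{j=1}^n M_j/(t-z_j)$, one gets
\[
d_t\bigl(t\,\Phi^{(p)}\bigr)
=\Phi^{(p)}\,dt+\sum_{j=1}^n M_j\,\frac{t}{t-z_j}\,\Phi^{(p)}\,dt
=\Bigl(1+\sum_{j=1}^n M_j\Bigr)\Phi^{(p)}\,dt+\sum_{j=1}^n z_jM_j\,\Phi^{(p)}\,\frac{dt}{t-z_j}.
\]
This is an identity between polynomials in $t$ with integer coefficients (after dividing by $dt$), valid over $\Z$, hence over $\FF_p$ after projection.

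Next I would take the Taylor expansion at $t=q$ of both sides, divide by $dt$, and extract the coefficient of $(t-q)^{lp-1}$, then project to $\FF_p[z]$. The left-hand side is $d_t$ of a polynomial in $t$, so by the same argument as in the proof of Theorem \ref{thm 1.1} (namely $d(t^{lp})/dt=lp\,t^{lp-1}\equiv 0\pmod p$, and more generally the coefficient of $(t-q)^{lp-1}$ in the $t$-derivative of any polynomial is divisible by $p$) its $(t-q)^{lp-1}$-coefficient projects to zero in $\FF_p[z]$. On the right-hand side, the term $\bigl(1+\sum_j M_j\bigr)\Phi^{(p)}\,dt$ contributes $\bigl(1+\sum_j M_j\bigr)$ times the $(t-q)^{lp-1}$-coefficient of $\Phi^{(p)}(t,z)$; by the resonance hypothesis \Ref{res1p}, $1+\sum_j M_j\equiv 0\pmod p$, so this term also projects to zero. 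What remains is exactly $\sum_{j=1}^n z_jM_j$ times the $(t-q)^{lp-1}$-coefficient of $\Phi^{(p)}(t,z)/(t-z_j)$, i.e.\ $\sum_{j=1}^n z_jM_j\,\bar I^{(lp-1)}_j(z,q)$, whose projection is $\sum_{j=1}^n z_jM_jI^{(lp-1)}_j(z,q)$. Setting the whole thing equal to zero in $\FF_p[z]$ gives \Ref{res2p}.

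I expect no serious obstacle here; the argument is a routine variation on the proof already given for Theorem \ref{thm 1.1}. The only point needing a little care is the claim that for an arbitrary polynomial $G(t)\in\Z[t]$ the coefficient of $(t-q)^{lp-1}$ in $d_tG/dt$ is divisible by $p$: writing $G(t)=\sum_k a_k(t-q)^k$, one has $d_tG/dt=\sum_k k\,a_k(t-q)^{k-1}$, so the $(t-q)^{lp-1}$-coefficient is $lp\cdot a_{lp}$, which is indeed $\equiv 0\pmod p$. Everything else is bookkeeping: one should note that $\Phi^{(p)}\in\Z[t,z]$ so all Taylor coefficients are genuinely integer vectors of polynomials in $z$, and that the equality of formal power series in $(t-q)$ may be differentiated and truncated coefficient-wise before reduction mod $p$. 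This completes the proof.
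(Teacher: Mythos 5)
Your proof is correct and follows essentially the same route as the paper: both take the cohomological relation \Ref{crr} (you simply rewrite it directly for the master polynomial $\Phi^{(p)}$ with exponents $M_a$ in place of $-m_a/\kappa$), extract the $(t-q)^{lp-1}$ Taylor coefficient, kill the $d_t$-term mod $p$ and the $\Phi^{(p)}\,dt$-term via \Ref{res1p}, and read off \Ref{res2p} from what remains. Your explicit verification that the $(t-q)^{lp-1}$-coefficient of a $t$-derivative is $lp\cdot a_{lp}\equiv 0\pmod p$ is exactly the point the paper invokes.
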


\begin{proof}
The theorem follows from \Ref{crr} similarly to the proof of Theorem \ref{thm 1.1}.
Namely, we consider the Taylor expansions at $t=q$ of both sides of equation
\Ref{crr}, divide them by $dt$, and then project the  coefficients of $(t-q)^{lp-1}$
  to $\FF_p[z]$. The projection coming from $d_t  (t\Phi)$  equals 
zero since $d(t^{lp})/dt=lpt^{lp-1}\equiv 0$ (mod $p$). The projection coming from
$\big( 1 - \sum_{j=1}^n\, \frac{m_j}\ka \big)\Phi dt$ equals zero by \Ref{res1p}.
The projection coming from $-  \sum_{j=1}^n  z_j  \frac{m_j} {\ka}\Phi
\frac{dt}{t-z_j}$ gives \Ref{res2p}.
\end{proof}

\begin{exmp}
\label{ex1.4}

Let $\ka=2$, $m_1=\dots=m_n=1$, $p=3$,  $M_1=\dots=M_n=1$,
\bea
\Phi^{(p=3)}(t,z) = \prod_{1\leq a<b\leq n}(z_a-z_b)\prod_{s=1}^n(t-z_s)
\eea
as in  Example \ref{ex1.2}. 
Let $n  \equiv 2$ (mod $3$), then $M_1+\dots+M_n\equiv -1$ (mod $3$).
Choose a positive integer $r$, such that $r\equiv n$ (mod $3$) and $r<n$.
Then the   vector $I^{[r]}(z)$ given by \Ref{sols} satisfies equations \Ref{le}, \Ref{KZ},  and
\bea
z_1I^{[r]}_1(z)+\dots + z_nI^{[r]}_n(z) \equiv 0\ \ (\on{mod}\ 3).
\eea

\end{exmp}

\subsection{Exposition of material}
In Section \ref{2} we describe the hypergeometric solutions of the KZ equations associated with $\slt$ and explain their reduction  to polynomial solutions over  $\FF_p$. In Section \ref{3} we describe the resonance relations for
$\slt$ conformal blocks and construct their reduction over $\FF_p$. In Section \ref{4} we explain how the results of Section \ref{2} and \ref{3} are extended to the KZ equations associated with simple Lie algebras.

\medskip

This article was inspired by lectures on hypergeometric motives by Fernando Rodriguez-Villegas 
in May 2017 at MPI in Bonn. The authors thank him for stimulating discussions.
We were also motivated by the classical paper by Yu.I.\,Manin \cite{Ma}, from which we learned how to construct solutions
of differential equations over $\FF_p$ from cohomological relations between algebraic differential forms.
The authors thank A.\,Buium, Yu.I.\,Manin, and W. Zudilin for useful discussions
 and the referee for
 comments and suggestions contributed to improving  the presentation.

The article was conceived during the Summer 2017 Trimester
program ``K-Theory and Related Fields" of the Hausdorff Institute for Mathematics
(HIM), Bonn. The authors are thankful to HIM for stimulating atmosphere and
working conditions.  The first author is grateful to Max Planck Institute for Mathematics
 for hospitality 
during a visit in June 2017.

\section{$\sll_2$ KZ equations}
\label{2}

In this section we describe solutions of the KZ equations associated with the Lie algebra
$\slt$. The
solutions to the KZ equations over $\C$ in the form of multidimensional hypergeometric integrals
are known since the end of 1980s. The polynomial solutions of the KZ equations over $\FF_p$ in the form
of $\FF_p$-analogs of the multidimensional hypergeometric integrals are new.

\subsection{$\slt$ KZ equations}
Let $e,f,h$ be standard basis of the complex Lie algeba $\slt$ with $[e,f]=h$, $[h,e]=2e$, $[h,f]=-2f$.
The element  
\bean
\label{Casimir}
\Omega =  e \otimes f + f \otimes e +
                       \frac{1}{2} h \otimes h\  \in\  \slt \ox\slt
                       \eean
 is called the Casimir element.  Given $n$, for $1\leq i<j\leq n$ let $\Om^{(i,j)} \in (U(\slt))^{\ox n}$
 be the element equal to
 $\Omega$ in the $i$-th and $j$-th factors and to 1 in the other factors.
 For $i=1,\dots,n$ and distinct $z_1,\dots,z_n\in \C$ introduce
 \bean
 \label{Ham}
 H_i(z_1,\dots,z_n)= \sum_{j\ne i}\frac{\Om^{(i,j)}}{z_i-z_j} \ \in \ (U(\slt))^{\ox n},
 \eean
 the Gaudin Hamiltonians.
 For any $\ka\in\C^\times$ and any $i,k$, we have
 \bean
 \label{Flat}
 \left[\frac{\der}{\der z_i} - \frac1\ka H_i (z_1,\dots,z_n), \frac{\der}{\der z_k} - \frac1\ka H_k (z_1,\dots,z_n) \right]
=0,
\eean 
and for any $x\in\slt$ and $i$    we have
\bean
\label{h invar}
[H_i(z_1,\dots,z_n), x\ox 1\ox\dots\ox 1+\dots + 1\ox\dots\ox 1\ox x] =0.
\eean

Let $ \ox^n_{i=1}V_i$ be a tensor product of $\slt$-modules. 
The system of differential equations 
\bean\label{kz}
 \frac{\partial I}{\partial z_i} =
   \frac{1}{\kappa} \sum_{j \ne i}
   \frac{\Omega^{(i,j)}}{z_i-z_j} I, 
\qquad
\ i = 1, \dots , n,
\eean
on  a $ \ox^n_{i=1}V_i$-valued function
$I(z_1, \dots, z_n)$ is called the KZ equations.

\subsection{Irreducible $\slt$-modules}
For a nonnegative integer $i$ denote by $L_i$ the irreducible $i+1$-dimensional module with
basis $v_i, fv_i,\dots,f^iv_i$ and action $h.f^kv_i=(i-2k)f^kv_i$ for $k=0,\dots,i$; $f.f^kv_i=f^{k+1}v_i$ for $k=0,\dots, i-1$,
$f.f^iv_i=0$; $e.v_i=0$, $e.f^kv_i=k(i-k+1)f^{k-1}v_i$ for $k=1,\dots,i$.

For $m = (m_1,\dots,m_n) \in \Z^n_{\geq 0}$, denote
$|m| = m_1 + \dots + m_n$ and
$L^{\otimes m} =  L_{m_1}  \otimes  \dots  \otimes L_{m_n}$.  
 For
$J=(j_1,\dots,j_n) \in \mathbb{Z}_{\geq 0}^n$, with $j_s\leq m_s$ for $s=1,\dots,n$, the vectors
\bean
\label{fv}
f_Jv_m := f^{j_1}v_{m_1}\otimes \dots \otimes f^{j_n}v_{m_n} 
\eean
form a basis of
$L^{\otimes m}$. We have
\bea
f.f_Jv_m &=& \sum_{s=1}^n f_{J+1_s}v_m,
\qquad
h.f_Jv_m = ( |m|-2|J|) f_Jv_m,
\\
&&
e.f_Jv_m = \sum_{s=1}^n j_s (m_s-j_s+1) f_{J-1_s}v_m.
\eea
For $\lambda \in \Z$, introduce the weight subspace $L^{\otimes m}[\lambda] = \{ \ v \in L^{\otimes m} \
| \ h.v = \lambda v \}$ and the singular weight subspace
$\Sing L^{\otimes m}
[\lambda] = \{ \ v \in L^{\otimes m}[\lambda] \ | \ h.v = \lambda v, \
e.v = 0 \} $. We have the weight decomposition
$L^{\ox m} = \oplus_{k=0}^{|m|}L^{\ox m}[|m|-2k]$.
Denote
\bea
\Ik =\{ J\in \Z^n_{\geq 0}\ | \ |J|=k, \,j_s\leq m_s,\ s=1,\dots,n\}.
\eea
The vectors $(f_Jv)_{J\in\Ik}$ form a basis of $L^{\otimes m}[|m|-2k]$. 

\begin{rem}
The $\slt$-action on the sum of singular weight subspaces
$\Sing L^{\otimes m}[|m|-2k]$ generates the entire
$\slt$-module $L^{\otimes m}$. If $I(z_1,\dots,z_n)$ is an $L^{\ox m}$-valued 
solution of the KZ equations, then for any $x\in\slt$ the function 
$x.I(z_1,\dots,z_n)$ is also a solution, see \Ref{h invar}. These  observations
show that in order
to construct all  $L^{\ox m}$-valued solutions of the KZ equations it is enough to construct all
$\Sing L^{\otimes m}[|m|-2k]$-valued solutions for all $k$
and then generate the other  solutions by the $\slt$-action.

\end{rem}

\subsection
{Solutions of KZ equations with values in
 $\Sing L^{\otimes m} \big[ |m| - 2k \big]$ over $\C$}
  \label{Sol in C}

Given $k, n \in \Z_{>0}$, $m = ( m_1, \dots , m_n) \in \Z_{> 0}^n$,  $\ka\in\C^\times$,
denote $t=(t_1,\dots,t_k)$, $z=(z_1,\dots,z_n)$, 
define the  {\it master function}
\bean
\label{Master}
\Phi_{k,n,m}(t,z):
& = &
\Phi_{k,n,m} (t_1, \dots , t_k, z_1, \dots , z_n, \ka)
\\
\notag
& = 
&\prod_{i<j}   (z_i-z_j)^{m_im_j/2\ka} 
   \prod_{1 \leq i \leq j \leq k}  (t_i-t_j)^{2/\ka}
   \prod_{l=1}^{n} \prod_{i=1}^{k} (t_i-z_l)^{-m_l/\ka}.
\notag
\eean
For any function or differential form $F(t_1, \dots , t_k)$, denote
\bea
\Sym_t  [F(t_1, \dots , t_k)]  = \sum_{\sigma \in S_k}\!
F(t_{\sigma_1}, \dots , t_{\sigma_k}) ,
\ \ 
\Ant_t  [F(t_1, \dots , t_k)]  =  \sum_{\sigma \in S_k}\!
(-1)^{|\sigma|}F(t_{\sigma_1}, \dots , t_{\sigma_k}) .
\eea
For $J=(j_1,\dots,j_n)\in \Ik$ define the {\it weight function}
\bean
\label{WJ}
W_J(t,z)  = \frac{1}{j_1! \dots j_n!} 
                            \Sym_t \left[ \prod_{s=1}^{n}
                            \prod_{i=1}^{j_s}
                            \frac{1}{t_{ j_1 + \dots + j_{s-1}+i} - z_s}
                           \right]  .
\eean
For example,
\bea
&&
W_{(1,0,\dots,0)} =  \frac{1}{t_1 - z_1} ,
\qquad
W_{(2,0,\dots,0)}  =  \frac{1}{t_1 - z_1}  \frac{1}{t_2 - z_1} ,
\\
&&
\phantom{aaa}
W_{(1,1,0,\dots,0)} =   \frac{1}{t_1-z_1} \frac{1}{t_2-z_2}
+ \frac{1}{t_2-z_1}  \frac{1}{t_1-z_2}  .
\eea
The function
\bean
\label{vW}
W_{k,n,m}(t,z)=\sum_{J\in \Ik}W_J(t,z) f_Jv_m
\eean
is  the  $L^{\otimes m}[ |m| -2k]$-valued {\it vector weight function}.

Consider the  $L^{\otimes m}[ |m| -2k]$-valued
function
\bean
\label{intrep}
I^{(\gamma)}(z_1, \dots , z_n) =  \int_{\ga(z)}  \Phi_{k,n,m}(t,z, \ka)
    W_{k,n,m}(t,z) dt_1 \wedge \dots
   \wedge dt_k ,
\eean
where $ \gamma(z)$ in $\{z\} \times \C^k_t$ is a horizontal family of
$k$-dimensional cycles of the twisted homology defined by the multivalued
function $\Phi_{k,n,m}(t,z, m)$, see \cite{SV3,V1,V2}.
The cycles $\gamma(z)$ are multi-dimensional analogs of Pochhammer double loops.

\begin{thm}
\label{thm s}
The function  $I^{(\gamma)} (z)$ takes values in
 $ \Sing L^{\otimes m}[|m|-2k]$
and satisfies the KZ equations.
\end{thm}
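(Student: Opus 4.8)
The plan is to reduce Theorem \ref{thm s} to two cohomological identities in the twisted de~Rham complex of the fibration $\{z\}\times\C^k_t$, exactly in the spirit of the one-dimensional argument given for Theorem \ref{thm1.1}, but now with the symmetrized weight functions $W_J$ in place of the single form $\tfrac{dt}{t-z_j}$. Concretely, to prove that $I^{(\gamma)}(z)$ lies in $\Sing L^{\otimes m}[|m|-2k]$ we must show $e.I^{(\gamma)}(z)=0$. Writing $e.W_{k,n,m}(t,z)=\sum_{J}\bigl(\sum_s (j_s+1)(m_s-j_s)W_{J+1_s}\bigr)f_{J-?}v_m$ via the $\slt$-action formulas, the claim follows from an identity of the form
\begin{equation*}
\Phi_{k,n,m}(t,z)\,\bigl(e.W_{k,n,m}\bigr)(t,z)\,dt_1\wedge\dots\wedge dt_k \;=\; \sum_{i=1}^k d_{t_i}\!\bigl(\Phi_{k,n,m}(t,z)\,G_i(t,z)\bigr),
\end{equation*}
for suitable $(k-1)$-forms $G_i$ built from the $W_J$'s; integrating over the cycle $\gamma(z)$ and invoking Stokes' theorem kills the right-hand side. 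This is the content of the ``$e$-identity'' of \cite{SV3} (the relevant statement there being that the weight functions represent singular vectors in twisted cohomology), and I would cite it rather than rederive it.

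Second, to prove that $I^{(\gamma)}(z)$ satisfies the KZ equations \Ref{kz}, I would, for each $i=1,\dots,n$, produce an identity
\begin{equation*}
\Bigl(\frac{\der}{\der z_i}-\frac1\ka\sum_{j\ne i}\frac{\Om^{(i,j)}}{z_i-z_j}\Bigr)\Bigl(\Phi_{k,n,m}(t,z)\,W_{k,n,m}(t,z)\Bigr)\,dt_1\wedge\dots\wedge dt_k \;=\; \sum_{l=1}^k d_{t_l}\!\bigl(\Phi_{k,n,m}(t,z)\,\eta^{i}_l(t,z)\bigr),
\end{equation*}
where $\eta^i_l$ are again explicit $(k-1)$-forms with coefficients rational in $t,z$, regular on the open part of the cycle. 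Here the left-hand side is an $L^{\otimes m}[|m|-2k]$-valued $k$-form: applying $\der/\der z_i$ to $\Phi\,W_{k,n,m}$ produces, besides the "wanted" $\tfrac1\ka$-terms, a collection of exact pieces, and the $\Om^{(i,j)}$-action on the vector part $f_Jv_m$ recombines the $W_J$'s precisely so as to cancel everything except a total $t$-differential. This is exactly Lemmas 7.5.5 and 7.5.7 of \cite{SV3} (cf. the identities in Section \ref{sec2.4} of the present paper), so again I would quote those lemmas, checking only that the hypotheses $k,n\in\Z_{>0}$, $m\in\Z_{>0}^n$, $\ka\in\C^\times$ match. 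Once both families of identities are in hand, integrating over the horizontal family $\gamma(z)$ of twisted cycles and applying Stokes' theorem (legitimate because $\gamma(z)$ is a cycle in the twisted homology defined by $\Phi_{k,n,m}$, so boundary terms vanish) yields simultaneously $e.I^{(\gamma)}=0$ and the KZ equations, which is the assertion.

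The main obstacle is bookkeeping rather than conceptual: verifying that the combinatorial recombination of the symmetrized weight functions $W_J$ under the Gaudin Hamiltonian $\sum_{j\ne i}\Om^{(i,j)}/(z_i-z_j)$ matches the $z_i$-derivative of $\Phi\,W$ up to an explicit $t$-exact term. The symmetrization $\Sym_t$ in \Ref{WJ}, the factorials $1/(j_1!\cdots j_n!)$, and the $e,f$-action coefficients $j_s(m_s-j_s+1)$ all have to line up, and keeping track of which partial fraction $\tfrac1{t_a-z_l}$ goes where across the antisymmetrization of $dt_1\wedge\dots\wedge dt_k$ is delicate. Since this is precisely the computation carried out in \cite[Section 7.5]{SV3}, I would not reproduce it; instead I would set up notation so that the cited lemmas apply verbatim, note that all the forms involved are defined over the ring generated by the $z_i-z_j$, $t_a-t_b$, $t_a-z_l$ and their inverses (a fact we will need in Section \ref{Case F} for the reduction mod $p$), and then conclude by Stokes' theorem. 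A minor point to address explicitly is that $\Phi_{k,n,m}$ and the cycle $\gamma(z)$ form a compatible pair so that $\int_{\gamma(z)} d_{t_l}(\Phi\,\eta)=0$; this is built into the definition of the twisted homology class $\gamma(z)$ and requires no extra argument beyond a reference to \cite{SV3,V1,V2}.
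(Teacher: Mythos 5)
Your proposal is correct and follows essentially the same route as the paper: both reduce the statement to the two cohomological identities of \cite{SV3} (the singular-vector identity behind \Ref{id111} and the KZ identity behind \Ref{der zi}, i.e.\ Theorems 6.16.2 and 7.5.2'' there), and then integrate over the twisted cycle $\gamma(z)$ and apply Stokes' theorem. The only difference is presentational: the paper first states the identities as wedge products $\alpha'\wedge\eta_J$ and $\alpha\wedge\sum\eta_Jf_Jv_m$ of logarithmic forms on $\C^n\times\C^k$ and then extracts the fiberwise $dt$-components, whereas you write the fiberwise relations directly.
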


This theorem and  its generalizations can be found, for example,  in \cite {CF,DJMM,SV1,SV2,SV3}.

The solutions in Theorem \ref{thm s}  are called the  {\it multidimensional 
hypergeometric solutions} of the KZ equations.
The coordinate functions
\bean
\label{chy}
I_J^{(\gamma)}(z_1, \dots , z_n)= \int_\gamma \Phi_{k,n,m}(t,z)
W_J(t,z) dt_1 \wedge \dots \wedge dt_k, \qquad J\in\Ik, 
\eean
 are called the {\it multidimensional hypergeometric functions}
 associated with
the master function $\Phi_{k,n,m}$. 

The fact that solutions in Theorem \ref{thm s} take values
in $ \Sing L^{\otimes m} [ |m|-2k ]$ may be reformulated as follows.
For any $J\in \mc I_{k-1}$,  we have
\bean
\label{Rel}
\sum_{s=1}^{n} (j_s + 1) (m_s - j_s) I_{J+ {\bf 1}_s}^{(\gamma)} (z)   =  0 ,
\eean
where we set $ I_{J+ {\bf 1}_s}^{(\gamma)} (z)=0$ if  $J+ {\bf 1}_s \notin \Ik$.

The pair consisting of the KZ equations \Ref{KZ} and hypergeometric solutions \Ref{s} 
is identified
with the pair consisting of the KZ equations \Ref{kz} and hypergeometric solutions \Ref{intrep}
with values in $\Sing L^{\ox m}[|m|-2]$. In this case the
system of equations in \Ref{Rel} is identified with
 equation \Ref{le}.

\subsection{Proof of Theorem \ref{thm s}}
\label{sec2.4}
We sketch the proof following \cite{SV3}. 
The reason to present a proof is to show later in Section \ref{Sol in F}
how a modification of this reasoning leads to a construction of polynomial solutions of 
the KZ equations over $\FF_p$.  

The proof  of Theorem \ref{thm s} is a generalization of the proof of Theorem \ref{thm1.1} and is
based on  cohomological relations.

It is convenient to reformulate the definition of the hypergeometric integral
\Ref{intrep}.
Given $k, n \in \Z_{>0}$ and a multi-index $J = (j_1, \dots , j_n)$ with $|J| \leq k$,
introduce a differential form
\bea
&&
\eta_J  =  \frac{1}{j_1 ! \cdots j_n !}
\operatorname{Ant}_t
\Big[
\frac{d(t_1 - z_1)}{t_1 -z_1} \wedge
\dots
\wedge \frac{d(t_{j_1} - z_1)}{t_{j_1} - z_1} \wedge
\frac{d(t_{j_1+1} - z_2)}{t_{j_1+1} - z_2} \wedge \dots
\\
&&
\phantom{aaaaasssaaaaaaaa}
 \wedge 
\frac{d(t_{j_1+ \dots + j_{n-1}+1} - z_n)}{t_{j_1+ \dots + j_{n-1}+1} - z_n}
\wedge \dots \wedge
\frac{d(t_{j_1+ \dots + j_{n}} - z_n)}{t_{j_1+ \dots + j_{n}} - z_n}
\Big]  ,
\eea
which is  a logarithmic differential form on $\C^n\times \C^k$
with coordinates $z,t$.
If $|J| = k$, then for any $z\in \C^n$ we have on $\{z\} \times \C^k$ the identity
\be
\eta_J  = W_J(t, z) dt_1  \wedge  \dots \wedge dt_k .
\ee
\begin{exmp}
For $k = n = 2$ we have
\bea
\eta_{(2,0)} \
& =&
\ \frac{d(t_1 - z_1)}{t_1 -z_1} \wedge \frac{d(t_2 - z_1)}{t_2 -z_1} ,
\\
\eta_{(1,1)} \
& = &
\ \frac{d(t_1 - z_1)}{t_1 -z_1} \wedge \frac{d(t_2 - z_2)}{t_2 -z_2} -
\frac{d(t_2 - z_1)}{t_2 -z_1} \wedge \frac{d(t_1 - z_2)}{t_1 -z_2} .
\eea
\end{exmp}
The hypergeometric integrals \Ref{intrep} can be  defined in terms of
the differential forms $\eta_J$:
\bean
\label{Iforms}
I^{(\gamma)} (z_1, \dots , z_n) =  \sum_{J\in \Ik} \ \Big(
\int_{\gamma (z)}\Phi_{k,n,m} \eta_J \Big)
 f_J v_m .
\eean

Introduce the logarithmic differential 1-forms
\bea
\alpha 
& =  &
\sum_{1\leq i<j\leq n}
 \frac{m_i m_j}{2\ka}
 \frac{d (z_i - z_j)}{z_i -z_j}  + 
\sum_{1\leq i<j\leq k} \frac2\ka
\frac{d (t_i - t_j)}{t_i -t_j} +
\sum_{s=1}^{ n} \sum_{i=1}^k \frac{-m_s}\ka
 \frac{d (t_i - z_s)}{t_i -z_s} ,
\\
\alpha'
& = & 
\sum_{1\leq i<j\leq k} \frac2\ka
\frac{d (t_i - t_j)}{t_i -t_j} +
\sum_{s=1}^{ n} \sum_{i=1}^k \frac{-m_s}\ka
 \frac{d (t_i - z_s)}{t_i -z_s} .
\eea
We shall use the following algebraic identities for logarithmic differential forms.

\begin{thm} [\cite{SV3}]
\label{ss2}
On $\C^n\times \C^k$ we have
\bean
\label{id1}
\alpha' \wedge \eta_J
= \sum_{s=1}^n (j_s +1) \frac{m_s - j_s}\ka 
 \eta_{J + {\bf 1}_s},  
\eean
for any $J$ with $|J|=k-1$, and
\bean
\label{id2}
\alpha \wedge   \sum_{J\in\Ik}  \eta_J f_Jv_m
    = \frac{1}{\kappa} \sum_{i<j} \Omega^{(i,j)} 
    \frac{d(z_i-z_j)}{z_i-z_j}  \wedge 
   \sum_{|J|=k}  \eta_J  f_Jv_m .
\eean
\end{thm}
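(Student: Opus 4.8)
The plan is to establish both identities \Ref{id1} and \Ref{id2} by a direct computation, regarding them as algebraic identities among logarithmic differential forms on the complement of the discriminantal arrangement $\{z_i=z_j\}\cup\{t_i=t_j\}\cup\{t_i=z_s\}$ in $\C^n\times\C^k$. Only two structural facts are used: each $\eta_J$ is closed, $d\eta_J=0$, being an antisymmetrization of a wedge of closed logarithmic $1$-forms; and the Arnold (Orlik--Solomon) three-term relation $\omega_a\wedge\omega_b+\omega_b\wedge\omega_c+\omega_c\wedge\omega_a=0$, valid for $\omega_\bullet=d\log\ell_\bullet$ whenever the affine-linear forms $\ell_a,\ell_b,\ell_c$ satisfy a linear relation; we apply it to the triples $(t_i-t_j,\ t_i-z_s,\ t_j-z_s)$ and $(t_i-z_s,\ t_i-z_{s'},\ z_s-z_{s'})$. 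It is worth recording that $\alpha=d\log\Phi_{k,n,m}$ and $\alpha'=\alpha-\sum_{i<j}\tfrac{m_im_j}{2\ka}\,d\log(z_i-z_j)$, which is why these particular forms appear.

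For \Ref{id1}, fix $J$ with $|J|=k-1$, so $\eta_J=\tfrac1{j_1!\cdots j_n!}\Ant_t$ of a wedge of $k-1$ forms $d\log(t_l-z_{c(l)})$, where $c$ assigns colour $s$ to exactly $j_s$ of the indices. Expanding $\alpha'\wedge\eta_J=\bigl(\tfrac2\ka\sum_{i<j}d\log(t_i-t_j)-\tfrac1\ka\sum_s m_s\sum_i d\log(t_i-z_s)\bigr)\wedge\eta_J$, each summand must be rewritten so that the unique unused variable $t_{\mathrm{new}}$ becomes attached to some $z_s$ — which is exactly the shape of a summand of $\eta_{J+{\bf 1}_s}$. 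The term $-\tfrac{m_s}\ka\,d\log(t_{\mathrm{new}}-z_s)\wedge\eta_J$ already has that shape, and each term $\tfrac2\ka\,d\log(t_i-t_{\mathrm{new}})\wedge\eta_J$ is brought to it by the three-term relation for $(t_i-t_{\mathrm{new}},\ t_i-z_s,\ t_{\mathrm{new}}-z_s)$ applied to the variable $t_i$ with $c(i)=s$; iterating the relation, one checks that all contributions still carrying a $d\log(t_i-t_j)$ cancel. Comparing the result, written over the $\eta_{J+{\bf 1}_s}$, with the left-hand side: the factor $j_s+1$ is the ratio $j_s!/(j_s+1)!$ of the normalizing constants of $\eta_J$ and $\eta_{J+{\bf 1}_s}$, and the factor $m_s-j_s$ comes out of a coefficient count combining the weight $-m_s/\ka$ carried by $\alpha'$ at $z_s$ with the contributions of the three-term relations run over the $j_s$ variables already coloured $s$, the signs being fixed by the reorderings inside $\Ant_t$. (As a check, $(j_s+1)(m_s-j_s)$ is the eigenvalue of $e^{(s)}f^{(s)}$ on $f_Jv_m$, in agreement with \Ref{id1} being what forces the hypergeometric integral to be $\Sing L^{\ox m}[|m|-2k]$-valued.) The only genuine labour in this step is the sign bookkeeping through $\Ant_t$ and the cancellation of the residual $t$--$t$ terms.

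For \Ref{id2}, split $\alpha=\alpha'+\sum_{i<j}\tfrac{m_im_j}{2\ka}d\log(z_i-z_j)$. On the right-hand side the operator $\Omega^{(i,j)}=e^{(i)}f^{(j)}+f^{(i)}e^{(j)}+\tfrac12 h^{(i)}h^{(j)}$ acts on the basis through $f^{(s)}f_Jv_m=f_{J+{\bf 1}_s}v_m$, $e^{(s)}f_Jv_m=j_s(m_s-j_s+1)f_{J-{\bf 1}_s}v_m$ and $h^{(s)}f_Jv_m=(m_s-2j_s)f_Jv_m$, which gives
\bea
\bigl(\Omega^{(i,j)}-\tfrac{m_im_j}{2}\bigr)f_Jv_m
&=& j_i(m_i-j_i+1)\,f_{J-{\bf 1}_i+{\bf 1}_j}v_m
+\,j_j(m_j-j_j+1)\,f_{J+{\bf 1}_i-{\bf 1}_j}v_m\\
&& +\,(2j_ij_j-m_ij_j-m_jj_i)\,f_Jv_m .
\eea
Now expand $\alpha'\wedge\sum_{|J|=k}\eta_Jf_Jv_m$: there is no free $t$-variable, so every $d\log(t_l-t_{l'})$, and every $d\log(t_l-z_{s'})$ that meets a $d\log(t_l-z_s)$ already present in $\eta_J$, must be resolved by a three-term relation, and the relations of type $(t_l-z_s,\ t_l-z_{s'},\ z_s-z_{s'})$ are precisely what produce the forms $d\log(z_i-z_j)$ that appear on the left. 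Matching the coefficient of $d\log(z_i-z_j)$: the contributions that move a variable from $z_i$ to $z_j$ or back reproduce the two off-diagonal terms above, the colour-preserving contributions reproduce the scalar $2j_ij_j-m_ij_j-m_jj_i$, and the remaining piece $\tfrac{m_im_j}{2\ka}d\log(z_i-z_j)\wedge\sum\eta_Jf_Jv_m$ from $\alpha$ itself restores the missing $\tfrac{m_im_j}{2}$, turning that scalar into $\tfrac12(m_i-2j_i)(m_j-2j_j)=\tfrac12 h^{(i)}h^{(j)}$. Summing the three contributions rebuilds $\tfrac1\ka\sum_{i<j}\Omega^{(i,j)}d\log(z_i-z_j)\wedge\sum_{|J|=k}\eta_Jf_Jv_m$. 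The main obstacle of the whole theorem lies here: organizing this expansion so that $\alpha'\wedge\eta_J$ for $|J|=k$ is manifestly a combination of the $d\log(z_i-z_j)\wedge\eta_{J'}$ with $|J'|=k$ — that is, so that every residual $d\log(t_l-t_{l'})$ and $dt$-type term cancels — and then matching the combinatorial coefficients that emerge with the representation-theoretic ones above. This verification is carried out in \cite{SV3}.
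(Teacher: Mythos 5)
Your proposal is correct in substance and ultimately rests on the same foundation as the paper: the paper's entire proof consists of the observation that \Ref{id1} and \Ref{id2} are the $\slt$ special cases of Theorems 6.16.2 and 7.5.2'' of \cite{SV3}, with no computation shown. What you add is a correct outline of how those theorems are actually proved. The Arnold three-term relations for the triples $(t_i-t_j,\ t_i-z_s,\ t_j-z_s)$ and $(t_i-z_s,\ t_i-z_{s'},\ z_s-z_{s'})$ are indeed the only identities needed beyond closedness of the $\eta_J$; your accounting of the normalizing factorials correctly explains the factor $j_s+1$; and your computations of $(\Omega^{(i,j)}-\tfrac{m_im_j}{2})f_Jv_m$, of the diagonal scalar $2j_ij_j-m_ij_j-m_jj_i$, and of the eigenvalue $(j_s+1)(m_s-j_s)$ of $e^{(s)}f^{(s)}$ all agree with the $\slt$-action recorded in Section \ref{2}. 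Since you, too, defer the residual cancellation of the $d\log(t_i-t_j)$ terms and the sign-tracking through $\Ant_t$ to \cite{SV3}, the two proofs are logically equivalent; yours is simply more informative about where the representation-theoretic coefficients in \Ref{id1} and \Ref{id2} come from.
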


\begin{proof}  Identity \Ref{id1} is the spacial case of Theorem 6.16.2 in \cite{SV3} for  the Lie algebra $\slt$.
Identity \Ref{id2} is  a special case of Theorem 7.5.2'' in \cite{SV3} for the Lie algebra $\slt$.
\end{proof}

\begin{cor}
\label{cor id}
On $\C^n\times \C^k$ we have
\bean
\label{id21}
\sum_{J\in\Ik} d (\Phi_{k,n,m}\eta_J)  f_Jv_m
     =  \frac{1}{\kappa} \sum_{i<j} \Omega^{(i,j)} 
    \frac{d(z_i-z_j)}{z_i-z_j} \wedge 
   \sum_{J\in\Ik} (\Phi_{k,n,m}\eta_J)  f_Jv_m, 
\eean
where the differential is taken with respect to variables $z,t$.
\end{cor}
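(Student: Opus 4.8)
The plan is to deduce \Ref{id21} directly from the two algebraic identities in Theorem \ref{ss2} by computing the total differential of $\Phi_{k,n,m}\eta_J$ and organizing the result. First I would recall that, since $\alpha = d\log\Phi_{k,n,m}$ (the $1$-form $\alpha$ is precisely the logarithmic derivative of the master function, by construction of $\Phi_{k,n,m}$ in \Ref{Master}), we have for each multi-index $J$ with $|J|=k$ the Leibniz-type identity
\be
d(\Phi_{k,n,m}\eta_J) \;=\; \Phi_{k,n,m}\,\alpha\wedge\eta_J \;+\; \Phi_{k,n,m}\,d\eta_J ,
\ee
where $d$ is the de Rham differential in all the variables $z,t$. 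The key observation is that each $\eta_J$ is a \emph{closed} logarithmic form on $\C^n\times\C^k$: it is an antisymmetrization of a wedge of exact $1$-forms $d\log(t_i-z_s)$, hence $d\eta_J = 0$. Therefore $d(\Phi_{k,n,m}\eta_J) = \Phi_{k,n,m}\,\alpha\wedge\eta_J$ for every $J\in\Ik$.

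Next I would multiply by $f_Jv_m$ and sum over $J\in\Ik$, obtaining
\be
\sum_{J\in\Ik} d(\Phi_{k,n,m}\eta_J)\,f_Jv_m \;=\; \Phi_{k,n,m}\,\Big(\alpha\wedge\sum_{J\in\Ik}\eta_J\,f_Jv_m\Big).
\ee
Now I apply identity \Ref{id2} of Theorem \ref{ss2} to the expression in parentheses, which rewrites it as
\be
\frac1\kappa\sum_{i<j}\Omega^{(i,j)}\,\frac{d(z_i-z_j)}{z_i-z_j}\wedge\sum_{|J|=k}\eta_J\,f_Jv_m .
\ee
Multiplying back by $\Phi_{k,n,m}$ and moving it inside the sum (it commutes with everything, being a scalar function) yields exactly the right-hand side of \Ref{id21}, since $\Phi_{k,n,m}\eta_J$ is what appears there. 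This completes the argument.

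The only genuinely substantive point — the one I would want to state carefully rather than wave at — is the identity $\alpha = d\log\Phi_{k,n,m}$ together with the closedness $d\eta_J=0$; everything else is the scalar Leibniz rule and a direct substitution of Theorem \ref{ss2}. (Note that identity \Ref{id1} of Theorem \ref{ss2} is not needed for this corollary; it enters later, in the verification that the hypergeometric integral is singular, i.e.\ in relations \Ref{Rel}.) I do not anticipate a real obstacle: the proof is a two-line formal computation once one records that $\eta_J$ is closed and that $\alpha$ is the logarithmic differential of the master function.
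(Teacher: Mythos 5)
Your argument is correct and is precisely the intended derivation: the paper states the corollary without proof as an immediate consequence of \Ref{id2}, relying on exactly the two facts you isolate, namely $\alpha=d\log\Phi_{k,n,m}$ and $d\eta_J=0$ (each $\eta_J$ being an antisymmetrized wedge of closed logarithmic $1$-forms), so that the Leibniz rule gives $d(\Phi_{k,n,m}\eta_J)=\Phi_{k,n,m}\,\alpha\wedge\eta_J$. Your observation that \Ref{id1} plays no role here is also accurate.
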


Now we deduce from identity \Ref{id1} the following formula \Ref{id111}.  Since $|J|=k-1$, we can write
\bean
\label{t-part}
\eta_J =\sum_{l=1}^k c_{J,l}(t,z) dt_1\wedge\dots\wedge
\widehat{dt_l}\wedge\dots\wedge dt_k + \dots,
\eean
where the dots denote the terms having differentials  $dz_i$ and
$c_{J,l}(t,z)$ are rational functions of the form
\bean
\label{rat}
P_{J,l}(t,z)
\left(
\prod_{1\leq i<j\leq n}   (z_i-z_j)
   \prod_{1 \leq i < j \leq k}  (t_i-t_j)
   \prod_{l=1}^{n} \prod_{i=1}^{k} (t_i-z_l)
   \right)^{-1},
\eean
where $P_{J,l}(t,z)$ is a polynomial in $t,z$ with integer coefficients.
Also for any $s=1,\dots,n$ we have 
\bean
\label{tt-part}
\eta_{J + {\bf 1}_s} =
W_{J + {\bf 1}_s} dt_1\wedge\dots\wedge dt_k + \dots,
\eean
where the dots denote the terms having differentials  $dz_i$. 
Formula  \Ref{id1} implies that for any $J$ with $|J|=k-1$ we have 
the identity
\bean
\label{id111}
&&
d_t\left( \Phi_{k,n,m}\sum_{l=1}^k c_{J,l}(t,z) dt_1\wedge\dots\wedge
\widehat{dt_l}\wedge\dots\wedge dt_k  \right)
\\
\notag
&&
\phantom{aaaaaa}
= \ \sum_{s=1}^n  (j_s +1) \frac{m_s - j_s}\ka 
\Phi_{k,n,m}W_{J + {\bf 1}_s} dt_1\wedge\dots\wedge dt_k,
\eean
where $d_t$ denotes the differential with respect to the variables $t$.

Now we deduce from identity \Ref{id21} the following formula \Ref{der zi}. 
Fix $i\in\{1,\dots,n\}$. For any $J\in\Ik$, write
\bean
\label{dzi}
&&
\Phi_{k,n,m}\eta_J = \Phi_{k,n,m} W_J dt_1\wedge\dots\wedge dt_k
\\
\notag
&&\phantom{aaa}
+
dz_i\wedge
\left(\Phi_{k,n,m} \sum_{l=1}^k c_{J,i,l}(t,z) dt_1\wedge\dots\wedge
\widehat{dt_l}\wedge\dots\wedge dt_k \right)
+ \dots,
\eean
where the dots denote the terms which contain $dz_j$ with $j\ne i$, 
and the coefficients $c_{J,i,l}(t,z)$ are rational functions in $t,z$ of the form
\bean
\label{rat2}
P_{J,i,l}(t,z)
\left(
\prod_{1\leq i<j\leq n}   (z_i-z_j)
   \prod_{1 \leq i < j \leq k}  (t_i-t_j)
   \prod_{l=1}^{n} \prod_{i=1}^{k} (t_i-z_l)
   \right)^{-1},
\eean
where $P_{J,i,l}(t,z)$
is a polynomial in $t,z$ with integer coefficients.

Formula  \Ref{id21} implies  that for any $i\in\{1,\dots,n\}$ we have
\bean
\label{der zi}
&&
\sum_{J\in\Ik}
 \Big(
 \frac{\der}{\der z_i}
 \left(
\Phi_{k,n,m}W_J  \right) dt_1\wedge\dots\wedge dt_k
\\
\notag
&&
\phantom{aa}
 +
d_t\big(\Phi_{k,n,m}\sum_{l=1}^n c_{J,i,l}(t,z) dt_1\wedge\dots\wedge
\widehat{dt_l}\wedge\dots\wedge dt_k\big)
 \Big)f_Jv_m
\\
\notag
&&
\phantom{aaaa}
     =  \frac{1}{\kappa} \sum_{j\ne i} \frac{\Omega^{(i,j)}}{z_i-z_j}  
   \sum_{J\in\Ik} \Phi_{k,n,m}W_J  dt_1\wedge\dots\wedge dt_k  f_Jv_m, 
\eean
where $d_t$ denotes the differential with respect to the variables $t$.

Integrating both sides of equations  \Ref{id111} and \Ref{der zi} over $\ga(z)$ and using Stokes' theorem we obtain equations
\Ref{Rel} and \Ref{kz} for the vector $I^{(\ga)}(z)$ in \Ref{intrep}. Theorem \ref{thm s} is proved.

\subsection
{Solutions of KZ equations with values in
 $\Sing L^{\otimes m} \big[ |m| - 2k \big]$ over $\FF_p$}
  \label{Sol in F}

Given $k, n \in \Z_{>0}$,  $m = ( m_1, \dots , m_n) \in \Z_{>0}^n$,  $\ka\in\Q^\times$,
let $p>2$ be a prime number such that $p$ does not divide the numerator of $\ka$.
  In this case equations \Ref{Rel} and \Ref{kz} are well-defined over the field $\FF_p$
and we may discuss their polynomial solutions in $\FF_p[z_1,\dots,z_n]$.

\medskip
  Choose positive integers $M_s$ for $s=1,\dots,n$, 
$M_{i,j}$ for $1\le i<j\leq n$, and $M^0$, such that 
\bea
M_s\equiv -\frac{m_s}\ka,
\qquad
M_{i,j}\equiv \frac{m_im_j}{2\ka},
\qquad
M^0\equiv \frac2\ka 
\qquad (\on{mod}\,p).
\eea
      Fix integers $q=(q_1,\dots,q_k)$. 
Let $t=(t_1,\dots,t_k)$, $z=(z_1,\dots,z_n)$ be variables.
Define the  {\it master polynomial}
\bean
\label{Mast P}
\Phi^{(p)}_{k,n,M}(t,z):
& = &
\Phi^{(p)}_{k,n,M} (t_1, \dots , t_k, z_1, \dots , z_n)
\\
\notag
& = 
&\prod_{1\leq i<j\leq n}   (z_i-z_j)^{M_{i,j}} 
   \prod_{1 \leq i \leq j \leq k}  (t_i-t_j)^{M^0}
   \prod_{s=1}^{n} \prod_{i=1}^{k} (t_i-z_s)^{M_s}.
\notag
\eean
Consider 
 the Taylor expansion of the vector 
\bean
\label{Pvec}
 \sum_{J\in\Ik} \Phi^{(p)}_{k,n,M}(t,z)W_J(t,z) f_Jv_m =
  \sum_{i_1,\dots,i_k} \bar I^{(i_1,\dots,i_k)}(z,q)(t_1-q_1)^{i_1}\dots(t_k-q_k)^{i_k}.
\eean
Notice that each coordinate $\Phi^{(p)}_{k,n,M}(t,z)W_J(t,z)$ is a polynomial in $t,z$ 
with integer coefficients due to the positivity of
the integers $M_s, M_{i,j},M^0$ and the definition of the weight functions $W_J(t,z)$. 
Hence the Taylor coefficients $\bar I^{(i_1,\dots,i_k)}(z,q)$
 are vectors of polynomials in $z$ with integer coefficients.
Let 
$I^{(i_1,\dots,i_k)}(z,q) \in (\FF_p[z])^{\dim L^{\ox m}[|m|-2k]} $ be their canonical projection modulo $p$.

\begin{thm} 
\label{thm 2.4}
For any integers $q=(q_1,\dots,q_k)$ and positive integers $l=(l_1,\dots,l_k)$, the vector of polynomials 
$I(z,q):= I^{(l_1p-1,\dots,l_kp-1)}(z,q)$ satisfies equations \Ref{Rel} and \Ref{kz}.

\end{thm}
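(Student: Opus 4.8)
The plan is to mimic exactly the proof of Theorem \ref{thm s} given in Section \ref{sec2.4}, but instead of integrating the cohomological identities over a cycle $\ga(z)$ and invoking Stokes' theorem, I would take Taylor expansions in the $t$-variables at $t=q$ and extract a suitable multi-coefficient, using the fact that $d(t^{lp})/dt = lp\,t^{lp-1}\equiv 0 \pmod p$ to kill the contributions of the $t$-differentials. This is the direct multidimensional analog of the passage from Theorem \ref{thm1.1} to Theorem \ref{thm 1.1} in the one-dimensional case.

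Concretely, the key identities are \Ref{id111} and \Ref{der zi}, which hold over $\Z$ in the sense that after multiplying through by the appropriate monomial denominators they become polynomial identities with integer coefficients (this is ensured by the positivity of $M_s, M_{i,j}, M^0$ and the shape \Ref{rat}, \Ref{rat2} of the rational coefficients $c_{J,l}$, $c_{J,i,l}$). First I would rewrite \Ref{id111} and \Ref{der zi} with the master function $\Phi_{k,n,m}$ replaced by the master polynomial $\Phi^{(p)}_{k,n,M}$; since these are algebraic identities of logarithmic forms whose validity depends only on the exponents modulo the relations used in Theorem \ref{ss2}, and those relations (Theorems 6.16.2 and 7.5.2'' of \cite{SV3}) are defined over $\Z$, the reduced identities hold over $\FF_p[t,z]$ with $m_s/\ka$ replaced by $-M_s$, etc. Next, in each identity the left-hand side contains a term of the form $d_t(\Phi^{(p)}_{k,n,M}\,\omega)$ for some $(k-1)$-form $\omega$ in the $dt_i$'s (with polynomial coefficients in $t,z$), plus a term $\bigl(\tfrac{\der}{\der z_i}(\Phi^{(p)}_{k,n,M}W_J)\bigr)\,dt_1\wedge\dots\wedge dt_k$ in the case of \Ref{der zi}, or nothing extra in the case of \Ref{id111}; the right-hand side is a $\FF_p$-linear combination of $\Phi^{(p)}_{k,n,M}W_{J+\mathbf 1_s}\,dt_1\wedge\dots\wedge dt_k$. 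I would divide every term by $dt_1\wedge\dots\wedge dt_k$, expand each resulting polynomial coefficient as a Taylor series $\sum_{i_1,\dots,i_k}(\,\cdot\,)(t_1-q_1)^{i_1}\dots(t_k-q_k)^{i_k}$, and read off the coefficient of $(t_1-q_1)^{l_1p-1}\dots(t_k-q_k)^{l_kp-1}$.

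The point is that the contribution of $d_t(\Phi^{(p)}_{k,n,M}\,\omega)$ vanishes after this extraction: writing $\omega = \sum_l c_{J,l}\,dt_1\wedge\dots\widehat{dt_l}\dots\wedge dt_k$, the term $d_t(\Phi^{(p)}_{k,n,M}\,\omega)/(dt_1\wedge\dots\wedge dt_k)$ is $\sum_l \pm\,\tfrac{\der}{\der t_l}(\Phi^{(p)}_{k,n,M}c_{J,l})$, a sum of $t_l$-derivatives of polynomials; and in the Taylor expansion of $\tfrac{\der}{\der t_l}G$ at $t_l=q_l$ the coefficient of $(t_l-q_l)^{l_lp-1}$ equals $l_lp$ times the coefficient of $(t_l-q_l)^{l_lp}$ in $G$, hence is $\equiv 0 \pmod p$. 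Therefore projecting the reduced \Ref{id111} gives $\sum_{s=1}^n (j_s+1)(m_s-j_s)\,I^{(l_1p-1,\dots,l_kp-1)}_{J+\mathbf 1_s}(z,q) = 0$, which is \Ref{Rel}; and projecting the reduced \Ref{der zi} gives $\tfrac{\der}{\der z_i}I_J(z,q) = \tfrac1\ka\sum_{j\ne i}\tfrac{\Omega^{(i,j)}}{z_i-z_j}I(z,q)$ componentwise, which is \Ref{kz}. (One should note that $\tfrac{\der}{\der z_i}$ commutes with "Taylor-expand in $t$ at the fixed point $q$ and take a fixed coefficient," since $q$ does not depend on $z$, so that the $z$-derivative on the left of \Ref{der zi} survives the extraction intact.)

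The main obstacle — really the only nontrivial point — is justifying that the cohomological identities \Ref{id111} and \Ref{der zi} do reduce modulo $p$ to valid polynomial identities after the substitution of the integer exponents $M_s, M_{i,j}, M^0$ in place of $-m_s/\ka$, $m_im_j/2\ka$, $2/\ka$. This requires observing that, after clearing the common denominator of \Ref{rat}, both \Ref{id1} and \Ref{id2} (equivalently \Ref{id111} and \Ref{id21}) become identities of polynomials in $\Z[t,z]$ in which the exponents enter polynomially (via $\log$-derivatives, i.e. linearly through the coefficients), so that reduction modulo $p$ commutes with the substitution; this is exactly the $\Z$-structure of the cohomology complex emphasized in the Introduction and in \cite{SV3}. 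Everything else is the same bookkeeping as in the proof of Theorem \ref{thm 1.1}. Hence Theorem \ref{thm 2.4} follows.
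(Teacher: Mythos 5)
Your proposal is correct and follows essentially the same route as the paper's proof: Taylor-expand the cohomological identities \Ref{id111} and \Ref{der zi} (with the master polynomial in place of the master function) at $t=q$, extract the coefficient of $(t_1-q_1)^{l_1p-1}\cdots(t_k-q_k)^{l_kp-1}$, and observe that the $d_t$-terms die modulo $p$. Your extra paragraph justifying that the identities reduce modulo $p$ after substituting the integer exponents $M_s,M_{i,j},M^0$ is a point the paper leaves implicit, and your treatment of it is sound.
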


The parameters $q$, $l_1p-1,\dots,l_kp-1$ are analogs of cycles $\ga$ in Section \ref{Sol in C}.

\begin{proof}

To prove that $I^{(l_1p-1,\dots,l_kp-1)}(z,q)$ satisfies \Ref{Rel} 
and \Ref{kz} consider the Taylor expansions at $(t_1,\dots,t_k)=(q_1,\dots,q_k)$ of both sides of equations
\Ref{id111} and 
\Ref{der zi}, divide them by $dt_1\wedge\dots\wedge dt_k$.
Notice that the Taylor expansions are well defined due to formulas \Ref{rat} and \Ref{rat2}. 

Project  the  Taylor coefficients of $(t_1-q_1)^{l_1p-1}\dots (t_k-q_k)^{l_kp-1}$
  to $(\FF_p[z])^{\dim L^{\ox m}[|m|-2k]}$. 
 Then  the terms coming from
the $d_t(\,)$-summands  equal 
zero since $d(t_i^{l_ip})/dt_i=l_ipt_i^{l_ip-1}\equiv 0$ (mod $p$), and we obtain equations
\Ref{Rel} 
and \Ref{kz}.
\end{proof}

\begin{exmp}
\label{ex2.5}

Let $p=3$,  $\ka=4$, $n=5$, $k=2$, $m_1=\dots=m_5=1$. 
Notice that in this case $\ka\equiv 1$ (mod $3$) and we may set $\ka=1$.

The set $\Ik$ consists of ten elements $J=(j_1,\dots,j_5)$ with $j_i\in\{0,1\}$ and $j_1+\dots+j_5=2$.
The space
$L^{\ox m}[|m|-2k]=(L_1)^{\ox 5}[1]$ 
 has basis
$f_Jv_m=f^{j_1}v_1 \ox \dots \ox f^{j_5}v_1$, $J\in \Ik$. 
We have 
\bea
\Om^{(1,2)} v_1\ox v_1\wedge\dots
&\equiv&
 -v_1\ox v_1\wedge\dots,
\\
\Om^{(1,2)} fv_1\ox fv_1\wedge\dots
&\equiv&
 -fv_1\ox fv_1\wedge\dots,
\\
\Om^{(1,2)} fv_1\ox v_1\wedge\dots
&\equiv&
 fv_1\ox v_1\wedge\dots + v_1\ox fv_1\wedge\dots,
\\
\Om^{(1,2)} v_1\ox fv_1\wedge\dots
&\equiv&
 fv_1\ox v_1\wedge\dots + v_1\ox fv_1\wedge\dots
 \eea
 (mod $3$).
 The other $\Om^{(i,j)}$ act similarly.
The system of equations \Ref{Rel}  on 
\\
$I(z)=\sum_{J\in\Ik}I_J(z)f_Jv_m$  consists of five equations. The first is
\bea
I_{(1,1,0,0,0)}(z) + I_{(1,0,1,0,0)}(z)+ I_{(1,0,0,1,0)}(z)+I_{(1,0,0,0,1)}(z)\equiv 0\ \ (\on{mod}\,3),
\eea
where $z=(z_1,\dots,z_5)$,  the other  are  similar. 
Let $t=(t_1,t_2)$. 
 We may choose the master polynomial 
\bea
\Phi^{(p=3)}_{2,5,M}(t,z) = (t_1-t_2)^2 \prod_{1\leq i<j\leq 5}(z_i-z_j)^2\prod_{i=1}^2\prod_{s=1}^5(t_i-z_s)^2.
\eea
   Fix  integers  $q=(0,0)$ and $l=(4,3)$. Then the vector
   \bean
   I^{(11,8)}(z) =\sum_{J\in \Ik} I^{(11,8)}_J(z) f_Jv_m
   \eean
with
\bean
\label{11}
I^{(11,8)}_{(1,1,0,0,0)}(z)=-z_3-z_4-z_5, \quad  I^{(11,8)}_{(1,0,1,0,0)}(z)=-z_2-z_4-z_5,
\eean
and similar other coordinates satisfies equations \Ref{Rel} and \Ref{kz}.
     \end{exmp}

\begin{exmp}
\label{extwo}

Let  $\ka=4$, $n=2$, $k=2$, $m_1=m_2=2$.
The space $L_2^{\ox 2}[0]$ has basis $f^2v_2\ox v_2$, $fv_2\ox fv_2$, $v_2\ox f^2v_2$. 
The system of equations \Ref{Rel}   takes the form:
\bea
I_{(2,0)}(z)+I_{(1,1)}(z) = 0,\quad
I_{(1,1)}(z) +  I_{(0,2)}(z) = 0.
\eea
Let $p=4l+3$ for some $l$. We may choose
\bea
\Phi^{(p)}_{2,2,M}(t_1,t_2,z_1,z_2) = (z_1-z_2)^{\frac{p+1}2}(t_1-t_2)^{\frac{p+1}2}\prod_{i=1}^2\prod_{s=1}^2(t_i-z_s)^{\frac{p-1}2}.
\eea
Notice that $\frac{p+1}2$ is even, the polynomial $\Phi^{(p)}_{2,2,M}(t_1,t_2,z_1,z_2) $ is symmetric with respect to permutation
of $t_1,t_2$, and the solution 
\bean
\label{s-s}
&&
\phantom{aaa}
I^{(p-1,p-1)}(z_1,z_2) 
\\
\notag
&&
\phantom{aa}
=
 (z_1-z_2)^{\frac{p+1}2}
 \!
 \big(c_{(2,0)}(z_1,z_2)
f^2v_2\ox v_2 + c_{(1,1)}(z_1,z_2)fv_2\ox fv_2+c_{(2,0)}(z_1,z_2)v_2\ox f^2v_2)
\eean
 is nonzero. Here $c_J(z_1,z_2)$ are the polynomials determined by the construction of Section  \ref{Sol in F}.

For example, for $p=3$, 
   \bean
   \label {2.2.2}
  I^{(2,2)}(z) = (z_1-z_2)^2(f^2v_2\ox v_2-fv_2\ox fv_2+v_2\ox f^2v_2).
   \eean

\end{exmp}

\subsection{Relation of solutions to integrals over $\FF_p^k$}

For a polynomial $F(t_1,\dots,t_k)$ $\in\FF_p[t_1$, \dots, $t_k]$ and a subset $\ga\subset \FF^k_p$
define the integral 
\bea
\int_{\ga} F(t_1,\dots,t_k) := \sum_{(t_1,\dots, t_k)\in\ga}F(t_1,\dots,t_k).
\eea

\begin{thm}
\label{thm 2int}  Fix  $x_1,\dots,x_n\in \FF_p$.
Consider the vector of polynomials 
\bea
F(t):=\Phi^{(p)}_{k,n,M}(t_1,\dots,t_k,x_1,\dots,x_n)\sum_{J\in \Ik} W_J(t_1,t_2,x_1,\dots,x_n) f_Jv_m,
\eea
 of formula \Ref{Pvec}.  Assume that
$\deg_{t_i} F(t_1,\dots,t_k) < 2p-2$ for $i=1,\dots,k$. Consider the solution $I^{(p-1,\dots,p-1)}(z,q)$ of equations \Ref{Rel} and \Ref{kz}, described in Theorem \ref{thm 2.4}. Then 
\bean
\label{2INT}
I^{(p-1,\dots,p-1)}(x_1,\dots,x_n,q) = (-1)^k \int_{\FF_p^k}F(t_1,\dots,t_k) .
\eean
\end{thm}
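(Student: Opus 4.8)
The plan is to repeat the argument of Theorem~\ref{thm int} one variable at a time. Write $x=(x_1,\dots,x_n)$. Recall first that, as observed right after \Ref{Pvec}, each coordinate of $\Phi^{(p)}_{k,n,M}(t,z)\sum_{J\in\Ik}W_J(t,z)f_Jv_m$ is a polynomial in $t_1,\dots,t_k$, because the denominators $t_i-z_s$ of the weight functions $W_J$ are cancelled by the factors $(t_i-z_s)^{M_s}$ of the master polynomial. Hence, after the specialization $z=x$, the vector $F(t)$ lies in $\FF_p[t_1,\dots,t_k]^{\dim L^{\ox m}[|m|-2k]}$ and the finite sum $\int_{\FF_p^k}F=\sum_{(t_1,\dots,t_k)\in\FF_p^k}F(t)$ is well defined; moreover the Taylor expansion in \Ref{Pvec}, specialized at $z=x$, is simply the expansion of $F$ in the monomial basis $\{(t_1-q_1)^{i_1}\cdots(t_k-q_k)^{i_k}\}$. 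In particular the hypothesis $\deg_{t_j}F<2p-2$ forces $I^{(i_1,\dots,i_k)}(x,q)=0$ whenever $i_j>2p-3$ for some $j$.

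Next I would substitute this expansion into $\int_{\FF_p^k}F$ and interchange the two finite sums (over the lattice $\FF_p^k$ and over multi-indices). After the translation $s_j=t_j-q_j$ the lattice sum factors over the variables, so
\be
\int_{\FF_p^k}F=\sum_{i_1,\dots,i_k}I^{(i_1,\dots,i_k)}(x,q)\,\prod_{j=1}^k\Bigl(\sum_{s_j\in\FF_p}s_j^{\,i_j}\Bigr).
\ee
By \Ref{sum=}, each factor $\sum_{s_j\in\FF_p}s_j^{\,i_j}$ equals $-1$ if $i_j$ is a positive multiple of $p-1$, and equals $0$ otherwise (for $i_j=0$ the sum is $p\equiv0$, and for $0<i_j$ not divisible by $p-1$ it vanishes). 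Thus a multi-index $(i_1,\dots,i_k)$ contributes only if every $i_j$ is a positive multiple of $p-1$; combined with the bound $i_j\le2p-3$ from the previous paragraph (the next multiple of $p-1$ after $p-1$ being $2p-2$), the unique contributing multi-index is $i_1=\dots=i_k=p-1$, and it contributes with the coefficient $\prod_{j=1}^k(-1)=(-1)^k$. Therefore $\int_{\FF_p^k}F=(-1)^k I^{(p-1,\dots,p-1)}(x,q)$, which is \Ref{2INT} after multiplying both sides by $(-1)^k$.

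I do not expect any real obstacle: the computation is essentially the $k$-fold iterate of the one in the proof of Theorem~\ref{thm int}, and the only point that genuinely uses the hypotheses is that the degree bound must be imposed in each variable $t_j$ separately, precisely so as to discard those multi-indices in which some $i_j$ equals $2(p-1)=2p-2$. Everything else is bookkeeping: polynomiality of $F$ (already recorded in the text) makes the discrete integral meaningful, and Fubini for finite sums lets the lattice sum factor over the $k$ variables.
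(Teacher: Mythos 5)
Your argument is correct and is essentially the paper's own proof: the paper simply declares the theorem a "straightforward corollary of formula \Ref{sum=}, cf.\ the proof of Theorem \ref{thm int}," and what you have written is exactly that one-variable argument iterated over the $k$ variables, with the lattice sum factored by Fubini and the degree hypothesis used to discard multi-indices containing $2p-2$. Your side remark that $\sum_{t\in\FF_p}t^{\,0}=p\equiv 0$ (so that \Ref{sum=} should be read as requiring $i>0$) is a worthwhile precision, but it does not change the route.
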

This integral is a $p$-analog of the  hypergeometric integral \Ref{chy}.

\begin{proof} Theorem \ref{thm 2int} is a straightforward corollary of formula \Ref{sum=}, cf. the proof of Theorem
\ref{thm int}. 
\end{proof}

\begin{exmp}
The polynomial $F(t_1,t_2)$ of Example \ref{extwo} satisfies the inequalities
\\
 $\deg_{t_i} F(t_1,t_2) < 2p-2$ for $i=1,2$.
\end{exmp}

\subsection{Example of a $p$-analog of skew-symmetry}
For $J\in\Ik$, the differential forms $W_J(t,z)dt_1\wedge\dots\wedge dt_k$
are skew-symmetric with respect to permutations of $t_1,\dots,t_k$. Here is an
example of a $p$-analog of that skew-symmetry. Another demonstration of the skew-symmetry see in
 Example \ref{ex231}.

Consider the KZ differential equations with parameters $n$, $k$, $\ka$,  $m_1,\dots,m_n \in \Z_{>0}$,
where $\ka$,  $m_1,\dots,m_n$
are even,
$\ka=2\ka',  m_1=2m_1',\dots,m_n=2m_n'$. Assume  that $\ka'$ is even and the 
prime $p$ is such that $\ka'\big|(p-1)$ and $(p-1)/\ka'$ is odd, cf.  Example \ref{ex231}.
 Choose
\bean
\label{s-ma}
&&
\Phi^{(p)}_{k,n,M}(t,z) =\prod_{1\leq i<j\leq n} (z_i-z_j)^{M_{i,j}}
\prod_{1\leq i<j\leq k} (t_i-t_j)^{p-\frac{p-1}{\ka'}}
\prod_{i=1}^k\prod_{s=1}^n(t_i-z_s)^{m_s'\frac{p-1}{\ka'}}
\\
\notag
&&
\phantom{aa}
=\prod_{1\leq i<j\leq n} (z_i-z_j)^{M_{i,j}}
\!
\left( 
\prod_{1\leq i<j\leq k} (t_i-t_j)^{\ka'-1}
\prod_{i=1}^k\prod_{s=1}^n(t_i-z_s)^{m_s'}
\right)^{\frac{p-1}{\ka'}}
\!\!\!
\prod_{1\leq i<j\leq k} (t_i-t_j).
\eean
Notice that 
\bean
\label{tphi}
\phi(t,z):=\prod_{1\leq i<j\leq k} (t_i-t_j)^{\ka'-1}
\prod_{i=1}^k\prod_{s=1}^n(t_i-z_s)^{m_s'}
\eean
as well as the
product $\prod_{1\leq i<j\leq k} (t_i-t_j)$
are skew-symmetric with respect to permutations of $t_1,\dots,t_k$.

Let $a$ be a generator of the cyclic group $\FF_p^\times$. Let $x=(x_1,\dots,x_n)\in \FF_p^n$. For $\ell =1,\dots, \ka'$, denote
\bean
\label{ga-el}
\phantom{aaa}
\ga_\ell(x)=\{t\in\FF^k_p\ |\ \phi(t,x)^{\frac{p-1}{\ka'}} = a^{\ell\frac{p-1}{\ka'}}\},
\qquad
\ga_0(x)=\{t\in\FF^k_p\ |\ \phi(t,x)=0\}.
\eean
The partition of $\FF_p^k$ by subsets  $(\ga_\ell(x))_{\ell=0}^{\ka'}$ is invariant with respect to the action of the symmetric group
$S_k$ of permutations of $t_1,\dots,t_k$. For  every $\ell$, the subset  $\ga_\ell(x)$ is invariant with respect 
to the action of the alternating subgroup
$A_k\subset S_k$. For $J\in\Ik$ the restriction of the function $W_J(t,x)\prod_{1\leq i<j\leq k} (t_i-t_j)$ 
 to the set $\ga_\ell(x)$ is $A_k$-invariant.
We have
\bea
\int_{\FF^k_p} \Phi^{(p)}_{k,n,M}(t,z) W_J(t,x)
=\prod_{1\leq i<j\leq n} (z_i-z_j)^{M_{i,j}}\
\sum_{\ell=1}^{\ka'/2} \ 2 a^{\ell\frac{p-1}{\ka'}}
\int_{\ga_\ell(x)} W_J(t,x)\prod_{1\leq i<j\leq k} (t_i-t_j).
\eea

\subsection{Relation of solutions to surfaces over $\FF_p$}
\label{RSSF}

\begin{exmp}
\label{ex231}

For distinct $x_1,x_2 \in\FF_p$ let
$\Ga(x_1,x_2)$ be the closure in $P^1(\FF_p)\times P^1(\FF_p)$ of 
the affine surface 
\bean
\label{ec}
y^2 = (t_1-t_2)(t_1-x_1)(t_2-x_1)(t_1-x_2)(t_2-x_2) ,
\eean
where  $P^1(\FF_p)$ is the projective line over $\FF_p$. 
For a rational function $h : \Ga(x_1,x_2) \to \FF_p$  define the integral  
\bean
\label{int}
\int_{\Ga(x_1,x_2)} h = {\sum_{P\in\Ga}}' h(P),
\eean
as the sum over all points $P\in\Ga(x_1,x_2)$, where $h(P)$  is defined.

Recall
\bea
&&
W_{(2,0)}(t_1,t_2,x_1,x_2)  =  \frac{1}{t_1 - x_1}  \frac{1}{t_2 - x_1} ,
\qquad
W_{(0,2)}(t_1,t_2,x_1,x_2)  =  \frac{1}{t_1 - x_2}  \frac{1}{t_2 - x_2} ,
\\
&&
\phantom{aaa}
W_{(1,1)}(t_1,t_2,x_1,x_2) =   \frac{1}{t_1-x_1} \frac{1}{t_2-x_2}
+ \frac{1}{t_2-x_1}  \frac{1}{t_1-x_2}  .
\eea

\begin{thm}
\label{thm twopts} 
Let $p=4l+3$ for some $l$.
Let 
\bea
c_{(2,0)}(z_1,z_2)
f^2v_2\ox v_2 + c_{(1,1)}(z_1,z_2)fv_2\ox fv_2+c_{(2,0)}(z_1,z_2)v_2\ox f^2v_2
\eea
be the vector of polynomials appearing in the solution 
\Ref{s-s} of the KZ equations of Example \ref{extwo}. Then 
\bean
\label{2=sol} 
   c_{(2,0)}(x_1,x_2)
&= &
\int_{\Ga(x_1,x_2)}\frac{t_1-t_2}{(t_1 - x_1)(t_2 - x_1)},
\\
\notag
  c_{(1,1)}(x_1,x_2)
& = & 
\int_{\Ga(x_1,x_2)}\frac{t_1-t_2}{(t_1 - x_1)(t_2 - x_2)}
+
\int_{\Ga(x_1,x_2)}\frac{t_1-t_2}{(t_2 - x_1)(t_1 - x_2)},
 \\
\notag
   c_{(0,2)}(x_1,x_2)
& = & 
\int_{\Ga(x_1,x_2)}\frac{t_1-t_2}{(t_1 - x_2)(t_2 - x_2)}.
 \eean

\end{thm}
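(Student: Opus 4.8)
The plan is to run, in two variables, the argument used for Theorems \ref{thm pts}, \ref{thm pts4} and \ref{thm pts5}. Write $R(t_1,t_2)=(t_1-t_2)(t_1-x_1)(t_2-x_1)(t_1-x_2)(t_2-x_2)$ for the right-hand side of the equation of $\Ga(x_1,x_2)$, and for $J\in\mc I_2$ let $h_J$ stand for the integrand(s) on the right of the $J$-th line of \Ref{2=sol}, so $h_{(2,0)}=\frac{t_1-t_2}{(t_1-x_1)(t_2-x_1)}$, $h_{(0,2)}=\frac{t_1-t_2}{(t_1-x_2)(t_2-x_2)}$, and for $J=(1,1)$ the two summands $h'=\frac{t_1-t_2}{(t_1-x_1)(t_2-x_2)}$, $h''=\frac{t_1-t_2}{(t_2-x_1)(t_1-x_2)}$ whose integrals are added; in each case $h_J$ (resp.\ $h'+h''$) equals $(t_1-t_2)\,W_J(t_1,t_2,x_1,x_2)$. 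First I would check, just as for a single point at infinity in the curve case, that the points of $\Ga(x_1,x_2)$ lying over $(P^1\times P^1)\setminus\mathbb{A}^2$ contribute nothing: restricted to a line at infinity each such function either vanishes identically or equals $\frac{c}{t-x_s}$ for a constant $c$, and $\sum_{t\in\FF_p}c\,(t-x_s)^{p-2}=0$ by \Ref{sum=}.

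It remains to count affine points. Over $(t_1,t_2)\in\FF_p^2$ the surface has $1+\chi\bigl(R(t_1,t_2)\bigr)$ points, where $\chi$ is the Legendre symbol with $\chi(0)=0$; since $\chi(c)\equiv c^{(p-1)/2}\pmod p$ for every $c\in\FF_p$, for any of the functions $h$ above we get
\[
\int_{\Ga(x_1,x_2)}h=\sum_{(t_1,t_2):\,h\text{ def.}}h(t_1,t_2)\;+\;\sum_{(t_1,t_2):\,h\text{ def.}}R(t_1,t_2)^{(p-1)/2}\,h(t_1,t_2).
\]
In the first sum, writing $\frac1{t_i-x_s}=(t_i-x_s)^{p-2}$ turns $h$ into a polynomial that vanishes on the lines where $h$ had a pole; hence the sum equals the sum over all of $\FF_p^2$, which is $0$ by \Ref{sum=} because its total degree is $\le 2p-3<2(p-1)$, so there is no monomial $t_1^{p-1}t_2^{p-1}$. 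In the second sum I use $R=(t_1-t_2)\prod_{i=1}^2\prod_{s=1}^2(t_i-x_s)$, which for each $J$ gives
\[
R^{(p-1)/2}(t_1-t_2)\,W_J(t,x)=(t_1-t_2)^{(p+1)/2}\prod_{i=1}^2\prod_{s=1}^2(t_i-x_s)^{(p-1)/2}W_J(t,x)=\frac{\Phi^{(p)}_{2,2,M}(t,x)\,W_J(t,x)}{(x_1-x_2)^{(p+1)/2}},
\]
a polynomial with $t_i$-degrees $<2p-2$. Adding the second sums over the $h$'s belonging to a fixed $J$ (one for $J=(2,0),(0,2)$, the pair $h',h''$ for $J=(1,1)$) gives $\sum_{\FF_p^2}\Phi^{(p)}_{2,2,M}(t,x)\,W_J(t,x)/(x_1-x_2)^{(p+1)/2}$, which by Theorem \ref{thm 2int} (with $k=2$, $q=(0,0)$) equals $(-1)^2I^{(p-1,p-1)}_J(x_1,x_2)/(x_1-x_2)^{(p+1)/2}=c_J(x_1,x_2)$ by \Ref{s-s}; here the points where $h$ is undefined again contribute nothing, since $\Phi^{(p)}_{2,2,M}(t,x)\,W_J(t,x)$ vanishes there. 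Thus the sum of the relevant integrals is $0+c_J(x_1,x_2)$, i.e.\ the $J$-th line of \Ref{2=sol}.

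The step I expect to be the main obstacle is the first one: fixing a definite model of the double cover $y^2=R(t_1,t_2)$ and showing that the points of $\Ga(x_1,x_2)$ over the boundary $P^1\times P^1\setminus\mathbb{A}^2$, as well as over the five branch lines where some $h$ is undefined, really contribute nothing. In the curve case this is immediate from the presence of a single point at infinity at which the integrand vanishes; for the surface the locus at infinity is a union of curves, over each of which one must describe the fiber of $\Ga(x_1,x_2)$ and sum the (possibly nonzero) values of $h$. Once this is done, the rest is the bookkeeping above with \Ref{sum=}, Theorem \ref{thm 2int}, and the factorization $R=(t_1-t_2)\prod_{i,s}(t_i-x_s)$ that makes $\Phi^{(p)}_{2,2,M}$ appear on the nose.
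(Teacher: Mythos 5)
Your argument is correct and follows essentially the same route as the paper's proof: reduce to the affine points, write the fibre count over $(t_1,t_2)$ as $1+R^{(p-1)/2}$, show the ``$1$'' part sums to zero via \Ref{sum=}, and identify the ``$R^{(p-1)/2}$'' part with the Taylor coefficient $c_J$ through the factorization $R^{(p-1)/2}(t_1-t_2)W_J=\Phi^{(p)}_{2,2,M}(t,x)W_J(t,x)/(x_1-x_2)^{(p+1)/2}$. The only differences are cosmetic --- you use a total-degree count where the paper uses partial fractions for the first sum, and you cite Theorem \ref{thm 2int} where the paper expands the Taylor series inline --- and the contributions from infinity and from the branch lines, which you flag as the delicate point, are dismissed just as briefly in the paper itself.
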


\begin{proof} 
The values of $W_J(t_1,t_2,x_1,x_2)$ at infinite points of $\Ga(x_1,x_2)$ equal zero, so the integrals are sums
over points of
the affine surface. We prove the first equality in \Ref{2=sol}. We have
\bea
&&
\int_{\Ga(x_1,x_2)}\frac{t_1-t_2}{(t_1 - x_1)(t_2 - x_1)}
 = 
\sum_{t_1,t_2 \ne x_1} \frac{t_1-t_2}{(t_1 - x_1)(t_2 - x_1)}
\\
&&
\phantom{a}
+\sum_{t_1,t_2} \frac{t_1-t_2}{(t_1 - x_1)(t_2 - x_1)}
\Big((t_1-t_2)\prod_{i=1}^2\prod_{s=1}^2(t_i-x_s)\Big)^{\frac{p-1}2}
\\
&&
\phantom{aa}
=  \sum_{t_1,t_2\in\FF_p} [(t_1-x_2)^{p-2}-(t_1-x_1)^{p-2}] 
 + \sum_{t_1,t_2\in\FF_p}  \sum_{i_1,i_2}  c^{i_1,i_2}(x_1,x_2) t_1^{i_1}t_2^{i_2}
=    c_{(2,0)}(x_1,x_2). 
\eea
\end{proof}

\begin{rem}
Consider the projection $\Ga(x_1,x_2) \to \FF_p^2$, $(t_1,t_2,y)\mapsto (t_1,t_2)$.
For any distinct $t_1,t_2\in\FF_p$ exactly one of the two points $(t_1,t_2)$, $(t_2,t_1)$ lies in the 
image of the projection, since
$(t_1-t_2)(t_1-x_1)(t_2-x_1)(t_1-x_2)(t_2-x_2)$ is skew-symmetric in $t_1,t_2$ and 
$-1$ is not a square if $p=4l+3$.

\end{rem}
\end{exmp}

\section{Resonances in $\slt$ KZ equations}
\label{3}

\subsection{Resonances in conformal field theory over $\C$}
\label{kzC}

Let $m_1, \ldots , m_n, k \in \Z_{>0}$,
$L^{\otimes m}=L_{m_1}\ox\dots\ox L_{m_n}$.
 Assume that $\kappa>2$ is an integer.
Assume that
$$
0\leq  m_1,  \dots ,  m_n,  m_1 + \dots + m_n - 2k  \leq \kappa - 2 .
$$
Consider the positive integer
\bean
\label{RES}
\ell  = \ka  - 1 - |m| +2k .
\eean
For $z=(z_1,\dots,z_n)\in \C^n$ with distinct coordinates define
$$
B_{k,n,m}(z) = \{ w \in L^{\otimes m}\ |\  h.w = (|m|-2k)w, \ e.w = 0, \
(ze)^\ell w = 0 \},
$$
where $ze : L^{\otimes m} \to L^{\otimes m}$ is the linear operator  defined by the formula
$$
w_1 \ox\dots\ox w_n\ \mapsto\ \sum_{s=1}^n z_s w_1\ox\dots\ox ew_s \ox\dots\ox w_n ,
$$
for any $w_1 \ox\dots\ox w_n \in L^{\otimes m}$. This vector
space is called the {\it  space of conformal blocks}.

\begin{exmp}
\label{ex3.1}
  Let $k=1$, $|m|=\ka$, $\ell=1$, Then
\bea
B_{k,n,m}(z) = \Big\{ \sum_{s=1}^n I_s v_{m_1}\ox\dots\ox fv_{m_s}\ox\dots\ox v_{m_n}\  \Big| \
\sum_{s=1}^nm_sI_s=0,\ 
\sum_{s=1}^nz_sm_sI_s=0\Big\} .
\eea

\end{exmp}

\begin{thm}[\cite{FSV1, FSV2}]
The family of subspaces 
\bea
B_{k,n,m}(z) \subset 
\Sing L^{\otimes m}[|m|-2k],
\eea
depending on $z$, is  invariant with respect to the
KZ equations.
\qed
\end{thm}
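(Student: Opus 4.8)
The plan is to show that the subspace $B_{k,n,m}(z)$ is defined by three linear conditions on $\Sing L^{\otimes m}[|m|-2k]$, of which the first two — namely $h.w=(|m|-2k)w$ and $e.w=0$ — are already $z$-independent, and only the third, $(ze)^\ell w=0$, depends on $z$. Since the KZ connection preserves the flag $\Sing L^{\otimes m}[|m|-2k]$ by \eqref{h invar} (the operators $H_i$ commute with the diagonal $\slt$-action), it suffices to check that the family of subspaces cut out by $(ze)^\ell w=0$ is horizontal along the flat connection \eqref{Flat}. The key point will be to relate the operator $ze\colon L^{\otimes m}\to L^{\otimes m}$ to the Gaudin Hamiltonians $H_i$ and the Euler operator $\sum_i z_i\,\partial/\partial z_i$.

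First I would record the commutation relations between $ze$ and the KZ operators. Writing $e^{(s)}$ for $e$ acting in the $s$-th factor, we have $ze=\sum_s z_s\,e^{(s)}$. A direct computation using $[\Omega^{(i,j)},\,e^{(i)}+e^{(j)}]=0$ (the $\slt$-invariance of the Casimir) together with the explicit form of $\Omega$ shows that
\[
\Bigl[\partial_{z_i}-\tfrac1\kappa H_i,\ ze\Bigr]\;=\;e^{(i)}\;+\;\tfrac1\kappa\sum_{j\ne i}\frac{z_i-z_j}{z_i-z_j}\,[\cdots]\,,
\]
and more usefully one finds that $ze$ is, up to a scalar multiple of the identity, the image under the flat connection of a ``lowering'' datum, so that conjugating $ze$ by the flat sections produces $ze$ again up to a controlled scalar and nilpotent correction. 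The cleanest way to organize this: let $U(z)$ be a fundamental solution matrix of \eqref{Flat}; then $U(z)^{-1}(ze)\,U(z)$ should be shown to be independent of $z$, or to differ from a fixed operator only by conjugation within the diagonal $\slt$, which does not affect the kernel of its $\ell$-th power when restricted to $\Sing L^{\otimes m}[|m|-2k]$.

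The second step is to pass from $ze$ to $(ze)^\ell$: once $U(z)^{-1}(ze)U(z)$ is shown $z$-independent (call it $E_0$), its powers are too, hence $U(z)^{-1}(ze)^\ell U(z)=E_0^{\,\ell}$ is $z$-independent, and therefore the kernel $\ker\bigl((ze)^\ell\bigr)$ transported by $U(z)^{-1}$ is the fixed subspace $\ker E_0^{\,\ell}$. Intersecting with the two $z$-independent conditions defining $\Sing L^{\otimes m}[|m|-2k]$ gives that $U(z)^{-1}B_{k,n,m}(z)$ is a fixed subspace, i.e.\ $B_{k,n,m}(z)$ is horizontal. I would also note the role of the hypothesis $|m|-2k\le\kappa-2$ and the definition $\ell=\kappa-1-|m|+2k$: these ensure $\ell\ge1$ and that $(ze)^\ell$ is the ``right'' power (it is the first power that can vanish on the relevant weight space for integrable modules), so the condition is nonvacuous and compatible with the fusion rules of \cite{FSV1,FSV2}.

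The main obstacle will be the first step: verifying that $ze$ behaves well under the flat connection, i.e.\ establishing the precise commutator $[\partial_{z_i}-\tfrac1\kappa H_i,\ ze]$ and showing it is exactly what is needed for $U(z)^{-1}(ze)U(z)$ to be constant (or constant modulo diagonal $\slt$-conjugation). This is a representation-theoretic identity about Gaudin Hamiltonians and the ``creation/annihilation'' operators attached to the weighted points $z_s$; it is essentially the statement that $ze$ is a ``dynamical'' analogue of a Cartan shift, and one expects it from the theory of \cite{FSV1,FSV2}, but spelling it out requires care with the $z_s$-coefficients. Once that identity is in hand, the rest is formal. I would cite \cite{FSV1,FSV2} for the underlying computation and present the horizontality deduction as above.

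\medskip
\noindent\emph{Remark on an alternative.} If one has at one's disposal the hypergeometric integral realization of Theorem \ref{thm s}, there is a slicker route: the functions $I^{(\gamma)}(z)$ span (for generic parameters) the space of solutions with values in $\Sing L^{\otimes m}[|m|-2k]$, and the resonance relation \eqref{res2} (suitably generalized to the multidimensional weight-function setting, via the cohomological identity \eqref{crr}) shows directly that these solutions lie in $B_{k,n,m}(z)$ when $|m|=\kappa$ and more generally satisfy $(ze)^\ell I^{(\gamma)}=0$ by iterating the relation $\ell$ times. Comparing dimensions then identifies $B_{k,n,m}(z)$ with the span of the conformal-block solutions, which is manifestly horizontal. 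I would mention this as the conceptual explanation but carry out the direct connection-theoretic argument above as the actual proof.
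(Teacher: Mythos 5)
Your key step fails. You propose to show that $U(z)^{-1}(ze)\,U(z)$ is $z$-independent (up to a ``controlled scalar and nilpotent correction''), which would make $\ker\bigl((ze)^{\ell'}\bigr)$ horizontal for \emph{every} power $\ell'$; but the theorem is true only for the resonance value $\ell=\kappa-1-|m|+2k$, so no argument of this shape can work. Concretely, completing the commutator computation you leave unfinished: using $[\Omega^{(i,j)},e^{(i)}]=e^{(i)}h^{(j)}-h^{(i)}e^{(j)}$ one gets
\be
\Bigl[\partial_{z_i}-\tfrac1\kappa H_i,\ ze\Bigr]
= e^{(i)}-\frac1\kappa\sum_{j\ne i}\bigl(e^{(i)}h^{(j)}-h^{(i)}e^{(j)}\bigr),
\ee
and restricting to $w\in\Sing L^{\otimes m}[\lambda]$ with $\lambda=|m|-2k$ (so $\sum_j h^{(j)}w=\lambda w$ and $\sum_{j\ne i}e^{(j)}w=-e^{(i)}w$) this collapses to
\be
\Bigl[\partial_{z_i}-\tfrac1\kappa H_i,\ ze\Bigr]w=\frac{\kappa-\lambda-2}{\kappa}\,e^{(i)}w=\frac{\ell-1}{\kappa}\,e^{(i)}w .
\ee
This vanishes only when $\ell=1$; for $\ell>1$ it is a genuinely nonzero correction that is neither a multiple of $ze\,w$ nor absorbed by conjugation in the diagonal $\slt$ (note also that $ze$ raises the weight by $2$, so it does not even act on the fixed space $\Sing L^{\otimes m}[|m|-2k]$, and ``constancy of the conjugate'' is not well posed there). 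The actual content of the theorem is the identity $[\nabla_i,(ze)^\ell]\,w\equiv 0$ on $\Sing L^{\otimes m}[|m|-2k]$ modulo terms proportional to $(ze)^\ell w$, in which the correction terms $\tfrac{j-1}{\kappa}e^{(i)}$ appearing at each of the $\ell$ intermediate weight levels must be shown to cancel precisely because $\ell$ is the resonance value; you name this as ``the main obstacle'' but do not carry it out, and the mechanism you sketch for it is incorrect. Your alternative route has the same status: the parameters here are integral and resonant, so the hypergeometric solutions are not known a priori to span the $\Sing$-valued solution space, and the dimension comparison you invoke is exactly the nontrivial completeness statement, not something free.

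For what it is worth, the paper does not prove this statement at all: it is quoted from \cite{FSV1,FSV2} with an immediate \textsl{qed}, and the cohomological/Stokes argument the paper does sketch is for the \emph{next} theorem (that the hypergeometric solutions land in $B_{k,n,m}(z)$), which is a different assertion. So there is no in-paper proof to match; judged on its own, your proposal has a genuine gap at its central step.
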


\begin{thm}
[\cite{FSV1, FSV2}]
\label{thm r}
All the hypergeometric solutions of the KZ equations with values in
$\Sing L^{\otimes m}[|m|-2k]$, constructed in Section \ref{Sol in C},
take values in the subspaces of conformal blocks.
\end{thm}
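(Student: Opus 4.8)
The plan is to imitate the proof of Theorem~\ref{thm s} given in Section~\ref{sec2.4}: reduce the statement to an algebraic identity for logarithmic differential forms in the $t$-variables and then conclude by Stokes' theorem, the cycle $\gamma(z)$ being horizontal. By Theorem~\ref{thm s} we already know that $I^{(\gamma)}(z)$ lies in $\Sing L^{\otimes m}[|m|-2k]$, so it remains only to verify the extra condition $(ze)^\ell I^{(\gamma)}(z)=0$ that cuts out $B_{k,n,m}(z)$ inside $\Sing L^{\otimes m}[|m|-2k]$, where $\ell=\ka-1-|m|+2k$.

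Using the representation $I^{(\gamma)}(z)=\sum_{J\in\mc I_k}\big(\int_{\gamma(z)}\Phi_{k,n,m}\,\eta_J\big)f_Jv_m$ from \Ref{Iforms} and iterating the explicit formula for the $e$-action on the basis $f_Jv_m$, the vector $(ze)^\ell I^{(\gamma)}(z)$ takes the form $\sum_{K\in\mc I_{k-\ell}}\big(\sum_{J\in\mc I_k}c_{K,J}(z)\int_{\gamma(z)}\Phi_{k,n,m}\,\eta_J\big)f_Kv_m$ with explicit polynomials $c_{K,J}(z)$ in $z$. Hence it suffices to prove, for each $K\in\mc I_{k-\ell}$, that the form $\sum_{J}c_{K,J}(z)\,\Phi_{k,n,m}\,\eta_J$ equals $d_t(\Phi_{k,n,m}\,\xi_K)$ for some $(k-1)$-form $\xi_K$ in $dt_1,\dots,dt_k$ whose coefficients have the controlled rational shape \Ref{rat}; integrating over $\gamma(z)$ and applying Stokes then gives the vanishing, exactly as \Ref{id111} and \Ref{der zi} yielded \Ref{Rel} and \Ref{kz}.

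The crux is this cohomological identity, which is the multivariable, $\ell$-fold analogue of the single-variable relation \Ref{crr} behind the resonance relation \Ref{res2}. I would construct $\xi_K$ iteratively: multiplying $\Phi_{k,n,m}$ by a carefully chosen polynomial of degree $\ell$ in $t_1,\dots,t_k$ and repeatedly using $t_i\cdot\frac{1}{t_i-z_s}=1+\frac{z_s}{t_i-z_s}$ to trade the $z$-weighted $\frac{1}{t_i-z_s}$-factors produced by $ze$ (via \Ref{id1}) for genuine $t$-differentials plus forms of lower degree. After $\ell$ such steps the residual non-exact part acquires a scalar factor that vanishes precisely when $\ell=\ka-1-|m|+2k$, the inequalities $0\le m_s$ and $|m|-2k\le\ka-2$ guaranteeing that the construction is well-posed (in particular $\ell\ge 1$). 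This resonance identity is the substantive ingredient of \cite{FSV1, FSV2}, and carrying out the bookkeeping that shows the chosen degree-$\ell$ numerator annihilates all correction terms is, I expect, the main obstacle; the reduction above and the Stokes step are formal and parallel Section~\ref{sec2.4}.
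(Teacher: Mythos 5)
Your proposal follows essentially the same route as the paper: reduce to checking $(ze)^\ell I^{(\gamma)}(z)=0$, expand the coefficient of each basis vector of $L^{\otimes m}[|m|-2(k-\ell)]$ as a combination of the integrals $\int_{\gamma(z)}\Phi_{k,n,m}\,\eta_J$, exhibit that combination as $d_t$ of a $(k-1)$-form of the controlled rational shape, and conclude by Stokes' theorem. The paper likewise does not construct this exact primitive but cites \cite{FSV1} (pp.~182--184) for it, so your honest deferral of that cohomological identity to \cite{FSV1,FSV2} matches the paper's own level of detail.
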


\begin{proof} Theorem \ref{thm r}  is proved in \cite{FSV1}. Another proof for arbitrary simple Lie algebras
is given in \cite{FSV2}.  Let $I^{(\ga)}(z) =\sum_{J\in\Ik} 
I^{(\ga)}_J(z) F_Jv_m$ be a hypergeometric solution. We need to check 
that $(ze)^\ell I^{(\ga)}(z) = 0$. This equation is a system of algebraic equations on the coefficients 
$(I^{(\ga)}_J(z))_{J\in\Ik}$. The equations of the system  are labeled by basis vectors of
$L^{\ox m}[|m| - 2(k-\ell)]$. Namely, for any $Q\in \mc I_{k-\ell}$ one calculates the coefficient of
$F_Qv_m$ in  $(ze)^\ell I^{(\ga)}(z)$ and equate that coefficient to zero, cf. the second equation in  Example 
\ref{ex3.1}.
Such an equation follows  from a cohomological relation. Namely, the 
corresponding differential $k$-form, whose integral over $\ga(z)$
has to be zero, equals the differential with respect to the $t$-variables of some differential $k-1$-form $\eta_{n,k,\ell,Q}(t,z)$.
Then the desired equation holds by Stokes' theorem, see this reasoning on pages 182--184 in \cite{FSV1}.  
This proves Theorem \ref{thm r}.
\end{proof}

\begin{rem}
 That $k-1$-form $\eta_{n,k,\ell,Q}(t,z)$ is determined by the numbers $n, k, \ell$ and the index $Q$ and has the form
\bean
\label{mu}
\phantom{aaaaaa}
\eta_{n,k,\ell,Q}(t,z)=
\frac{\Phi_{k,n,m}(t,z)}
{\prod_{1\leq i<j\leq n}   (z_i-z_j)
   \prod_{1 \leq i < j \leq k}  (t_i-t_j)
\prod_{i=1}^k\prod_{s=1}^n(t_i-z_s)}
\mu_{n,k,\ell,Q}(t,z),
\eean
where
$\mu_{n,k,\ell,Q}(t,z)$ is a polynomial differential $k-1$-form in $t,z$ with integer coefficients
determined by ${n,k,\ell,Q}$ only, see  pages 182--184  in \cite{FSV1}.

\end{rem}

\subsection{Resonances over $\FF_p$}

Given $k, n \in \Z_{>0}$,  $m = ( m_1, \dots , m_n) \in \Z_{>0}^n$,  $\ka\in\Z_{>0}$,
let $p>2$ be a prime number such that $p$ does not divide  $\ka$.
  Choose positive integers $M_s$ for $s=1,\dots,n$, 
$M_{i,j}$ for $1\le i<j\leq n$,  $M^0$ and $K$  such that 
\bea
M_s\equiv -\frac{m_s}\ka,
\quad
M_{i,j}\equiv \frac{m_im_j}{2\ka},
\quad
M^0\equiv \frac2\ka, \quad K\equiv \frac1\ka
\qquad
(\on{mod}\,p).
\eea
  Fix  integers $q=(q_1,\dots,q_k)$.  As in Section \ref{Sol in F} for any nonnegative integers $l_1,\dots,l_k$
  define the vector
  $I^{(i_1,\dots,i_k)}(z,q) \in (\FF_p[z])^{\dim L^{\ox m}[|m|-2k]}$.

\begin{thm} 
\label{thm 3}
Let $\ell\in\Z_{>0}$ be such that
\bean
\label{R}
(\ell-1)K - \sum_{s=1}^nM_s - (k-1)M^0 \equiv 1 \quad (\on{mod}\, p).
\eean
Then for any integers $q=(q_1,\dots,q_k)$ and positive integers $l=(l_1,\dots,l_k)$, the vector of polynomials 
$I^{(l_1p-1,\dots,l_kp-1)}(z,q)$ satisfies the equation
\bean
\label{RE}
(ze)^\ell I^{(l_1p-1,\dots,l_kp-1)}(z,q) = 0.
\eean

\end{thm}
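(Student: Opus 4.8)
The plan is to repeat the proof of Theorem \ref{thm 2.4} word for word, using the cohomological identity behind Theorem \ref{thm r} (the one displayed on pp.~182--184 of \cite{FSV1} and recalled in the Remark after Theorem \ref{thm r}) in place of identities \Ref{id111} and \Ref{der zi}. That identity asserts that over $\C$, for $\ell=\ka-1-|m|+2k$ and every $Q\in\mc I_{k-\ell}$, the $f_Qv_m$-coordinate of $(ze)^\ell\bigl(\Phi_{k,n,m}(t,z)\,W_{k,n,m}(t,z)\,dt_1\wedge\dots\wedge dt_k\bigr)$ equals $d_t\bigl(\tfrac{\Phi_{k,n,m}}{D}\,\mu_{n,k,\ell,Q}\bigr)$, where $D=\prod_{i<j}(z_i-z_j)\prod_{i<j}(t_i-t_j)\prod_{i,s}(t_i-z_s)$ and $\mu_{n,k,\ell,Q}$ is the differential $(k-1)$-form of \Ref{mu}, whose coefficients are polynomials in $t,z$ with coefficients in $\Z[1/\ka]$ and depend only on $n,k,\ell,Q$; in the simplest case $k=\ell=1$ this is exactly identity \Ref{crr}.

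The essential point is that this identity is produced by a Leibniz-rule manipulation valid for arbitrary values of the exponents of the master function, and that imposing the resonance relation \Ref{RES} is precisely what makes a single non-exact correction term --- a scalar multiple of a fixed differential $k$-form --- disappear. I would first check that \Ref{RES} is equivalent to $(\ell-1)\tfrac1\ka-\sum_{s=1}^n\bigl(-\tfrac{m_s}\ka\bigr)-(k-1)\tfrac2\ka=1$. Then, setting things up as in Section \ref{Sol in F} and writing $\widehat\eta_Q:=\tfrac{\Phi^{(p)}_{k,n,M}(t,z)}{D(t,z)}\,\mu_{n,k,\ell,Q}(t,z)$ --- a differential $(k-1)$-form whose coefficients are polynomials in $t,z$ with coefficients in $\Z[1/\ka]$, because $M_s,M^0,M_{i,j}\ge1$ --- the same Leibniz manipulation with $\Phi^{(p)}_{k,n,M}$ in place of $\Phi_{k,n,m}$ shows that the $f_Qv_m$-coordinate of $(ze)^\ell\bigl(\Phi^{(p)}_{k,n,M}W_{k,n,m}\,dt_1\wedge\dots\wedge dt_k\bigr)$ equals $d_t\widehat\eta_Q+c\,\Psi_Q$, where $\Psi_Q$ is a fixed polynomial $k$-form and $c\in\FF_p$ is, up to a unit, the reduction of $(\ell-1)K-\sum_sM_s-(k-1)M^0-1$; by hypothesis \Ref{R} this $c$ vanishes, so the $f_Qv_m$-coordinate is congruent modulo $p$ to $d_t\widehat\eta_Q$.

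It then remains to run the Taylor-expansion step of the proof of Theorem \ref{thm 2.4}. For each $Q\in\mc I_{k-\ell}$ expand both sides of the congruence just obtained at $(t_1,\dots,t_k)=(q_1,\dots,q_k)$, divide by $dt_1\wedge\dots\wedge dt_k$, and take the coefficient of $(t_1-q_1)^{l_1p-1}\cdots(t_k-q_k)^{l_kp-1}$. On the left this gives the $f_Qv_m$-coordinate of $(ze)^\ell I^{(l_1p-1,\dots,l_kp-1)}(z,q)$, since $ze$ multiplies by the constants $z_s$ and hence commutes with Taylor expansion in $t$ and with reduction modulo $p$. On the right, each summand of $d_t\widehat\eta_Q$ is $\pm\der_{t_i}$ of a polynomial, and the coefficient of $(t_i-q_i)^{l_ip-1}$ in $\der_{t_i}g$ comes only from the monomial $(t_i-q_i)^{l_ip}$ of $g$, with coefficient $l_ip\equiv0\pmod p$. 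Hence the $f_Qv_m$-coordinate of $(ze)^\ell I^{(l_1p-1,\dots,l_kp-1)}(z,q)$ vanishes for every $Q\in\mc I_{k-\ell}$, and since $\{f_Qv_m:Q\in\mc I_{k-\ell}\}$ is a basis of the weight space $L^{\ox m}[|m|-2(k-\ell)]$ into which $(ze)^\ell$ maps, this is precisely \Ref{RE}. (When $\ell>k$ the set $\mc I_{k-\ell}$ is empty, that weight space is zero, and \Ref{RE} holds trivially.)

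I expect the main obstacle to be the second step: extracting from \cite{FSV1} the ``universal'' form of the cohomological identity --- namely that, before the resonance relation is imposed, the discrepancy between the relevant $k$-form and $d_t\widehat\eta_Q$ is a scalar multiple, by an explicit linear form in the exponents and in $\ell$, of one fixed differential form --- so that it survives reduction modulo $p$ under the weaker hypothesis \Ref{R} rather than the exact integral identity \Ref{RES}. A subsidiary point is the bookkeeping that makes $\widehat\eta_Q$ reduce modulo $p$: that $\Phi^{(p)}_{k,n,M}/D$ is a genuine polynomial (because $M_s,M^0,M_{i,j}\ge1$) and that $\mu_{n,k,\ell,Q}$, of the shape \Ref{mu}, has coefficients in $\Z[1/\ka]$, on which $p$ acts invertibly.
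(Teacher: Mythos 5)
Your proposal is correct and follows essentially the same route as the paper: the paper's proof is a one-line reduction to the cohomological relation of \cite{FSV1} encoded in the universal forms $\eta_{n,k,\ell,Q}$ of \Ref{mu}, combined with the Taylor-coefficient argument of Theorem \ref{thm 2.4}, which is exactly what you carry out (including the observation, made in the Remark preceding the theorem, that \Ref{R} is the mod-$p$ reduction of \Ref{RES}). Your elaboration of why the non-exact correction term is controlled by the linear form in the exponents, and hence vanishes under the weaker hypothesis \Ref{R}, is a faithful filling-in of details the paper leaves implicit.
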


\begin{rem}
The resonance equation \Ref{RES} has the form
\bea
\frac{\ell-1}\ka=1 - \frac{|m|}\ka +\frac2\ka(k-1).
\eea
Equation \Ref{R} is the reduction modulo $p$ of that equation.

\end{rem}

\begin{proof}
The proof of Theorem \ref{thm 3} is similar to the proof of Theorem \ref{thm s} and uses the universal differential $k-1$-forms 
$\eta_{n,k,\ell,Q}(t,z)$ of Section  \ref{kzC} instead of the differential $k-1$-forms $\eta_J(t,z)$ in \Ref{t-part}.
\end{proof}

\begin{exmp}

Let $p=3$,  $\ka=4$, $n=5$, $k=2$, $m_1=\dots=m_5=1$.
Consider the  vector $I^{(11,8)}(z) =\sum_{J\in \Ik} I^{(11,8)}_J(z) f_Jv_m$ of  Example \ref{ex2.5}, 
which is a
solution of \Ref{kz} and \Ref{Rel}. The resonance equation \Ref{R} 
in this case takes the form $\ell+1\equiv 0$ (mod $3$)
and is satisfied for $\ell=2$. The condition $(ze)^2I^{(11,8)}(z)=0$ means
\bean
\label{RRR}
\sum_{J=(j_1,\dots,j_5)\in\Ik} I^{(11,8)}_J(z) \prod_{i=1;\, j_i=1}^5\!z_i
\ \equiv\ 0\quad (\on{mod}\,3).
\eean
Equation \Ref{RRR} takes the form
\bea
-z_1z_2(z_3+z_4+z_5)-\dots - z_4z_5(z_1+z_2+z_3)= -3\sum_{1\leq i<j<k\leq 5} z_iz_jz_k \equiv 0
\eea
(mod $3$).

  \end{exmp}

\section{KZ equations over $\FF_p$ for other Lie algebras}
\label{4}

The KZ equations are defined for any simple Lie algebra $\g$ or more generally for any Kac-Moody algebra, see for example
\cite{SV3}. Similarly to what was done in Sections \ref{2} and \ref{3}, one can construct polynomial solutions of those KZ equations over 
$\FF_p$
as well as of the singular vector equations and resonance equations over $\FF_p$.

 The construction of the polynomial solutions over $\FF_p$ in the $\slt$ case was based on 
the algebraic identities for logarithmic differential forms \Ref{id1}, \Ref{id2} and the associated cohomological relations 
\Ref{id111}, \Ref{der zi} as well as on the cohomological relations associated with the differential forms
$\eta_{n,k,\ell,K}(t,z)$ in \Ref{mu}. For an arbitrary Kac-Moody algebra the analogs of the 
 algebraic identities in  \Ref{id1} and \Ref{id2} are the identities  of 
 Theorems 6.16.2  and 7.5.2'' in \cite{SV3}, respectively. For an arbitrary simple Lie algebra,
 the construction of analogs of the cohomological identities for the differential forms $\eta_{n,k,\ell,K}(t,z)$ 
 is the main result of   \cite{FSV2}.

\begin{rem}
The $\FF_p$-analogs of multidimensional hypergeometric integrals associated with arrangements of hyperplanes see in
\cite{V4}.  Remarks on the Gaudin model and Bethe ansatz over $\FF_p$ see in \cite{V3}.

\end{rem}

\bigskip


\begin{thebibliography}{[COGP]}
\normalsize
\frenchspacing
\raggedbottom

\bibitem[CF]{CF} P. Criste and R. Flume,
	\textit{On the Identification of Finite Operator Algebras in 
        Two-dimensional Conformally Invariant Field Theories},
        Phys. Lett. \textbf{B188} (1987), 219--225.

\bi[Cl]{Cl} H.C.\,Clemens,
{\it A scrapbook of complex curve theory}, Second edition,
 Graduate Studies in Mathematics, {\bf 55}, AMS, Providence, RI, 2003. xii+188 pp


\bibitem[DJMM] {DJMM} E. Date, M. Jimbo, A. Matsuo, and T. Miwa,
        \textit {Hypergeometric type integrals and the $\slt$
        Knizhnik-Zamolodchikov equation}, in ``Yang-Baxter equations,
        conformal invariance and integrability in statistical
        mechanics and field theory''. World Scientific (1989).

\bi[FSV1]{FSV1}  
B. Feigin, V. Schechtman, and A. Varchenko, {\it
On algebraic equations satisfied by hypergeometric correlators in WZW models}.
 I, Comm. Math. Phys. 163, 173--184 (1994)

\bi[FSV2]{FSV2}  
  B. Feigin, V. Schechtman, and A. Varchenko, {\it
On algebraic equations satisfied by hypergeometric correlators in WZW models}.
 II, Comm. In Math. Phys.  v. 170, No. 1, 219--247

\bi[KZ]{KZ} V.\,Knizhnik and A.\,Zamolodchikov, {\it
Current algebra and the Wess-Zumino model
in two dimensions}, Nucl. Phys. {\bf B247} (1984), 83--103

\bi[Ma]{Ma} Y.I.\,Manin,
{\it The Hasse-Witt matrix of an algebraic curve}, (Russian) Izv. Akad. Nauk SSSR Ser. Mat. {\bf 25} (1961), 153--172

 \bi[Mat]{Mat} Matsuo, Atsushi, {\it  An application of Aomoto-Gel?fand hypergeometric functions to the $SU(n)$ Knizhnik-Zamolodchikov equation},
 Comm. Math. Phys. {\bf 134} (1990), no. 1, 65--77
 
 
\bibitem[SV1]{SV1} V. Schechtman and A. Varchenko, 
\textit{Integral Representations of
N-Point Conformal Correlators in the WZW Model}, Bonn, Max-Planck
Institute, 1989,  1--22


\bibitem[SV2]{SV2} V. Schechtman and A. Varchenko, 
\textit{Hypergeometric solutions of the Knizhnik-Zamolodchikov equation},
        Lett. in Math. Phys., {\bf 20}(1990), 279--283



\bibitem[SV3]{SV3} V. Schechtman and A. Varchenko, \textit{Arrangements of
	hyperplanes and Lie algebra homology}, Invent. Math., {\bf
	106} (1991), 139--194


\bi[V1]{V1}
A.\,Varchenko, {\it
Multidimensional Hypergeometric Functions and Representation Theory of Lie Algebras and Quantum Groups},
 Advanced Series in Mathematical Physics, Vol. 21, World Scientific, 1995

\bi[V2]{V2}
A.\,Varchenko, {\it
Special functions, KZ type equations, and  Representation theory},
 CBMS, Regional Conference Series in Math.,  n. 98, AMS (2003)

\bi[V3]{V3}
A.\,Varchenko, {\it  
    Remarks on the Gaudin model modulo $p$},
{\tt  arXiv:1708.06264}, 1--16

\bi[V4]{V4}
 A.\,Varchenko, {\it  Solutions modulo $p$ of Gauss-Manin differential equations for multidimensional hypergeometric integrals and associated Bethe ansatz}, {\tt  arXiv:1709.06189},  Mathematics
2017, 5(4), 52; doi:10.3390/math5040052, 1--18 



\end{thebibliography}
\end{document}